\numberwithin{equation}{section}
\newtheorem{theorem}{Theorem}[section]
\newtheorem{lemma}[theorem]{Lemma}
\newtheorem{proposition}[theorem]{Proposition}
\newtheorem{corollary}[theorem]{Corollary}
\newcounter{conj}
\newtheorem{conjecture}[conj]{Conjecture}
\theoremstyle{definition}
 \title{\Large{The Fyodorov-Bouchaud formula and Liouville conformal field theory}}
 \author{ Guillaume Remy\footnote{ D\'epartement de math\'ematiques et applications, \'Ecole normale sup\'erieure, CNRS, PSL Research University, 75005 Paris, France. Research supported in part by the ANR grant Liouville (ANR-15-CE40-0013).}  }
  \date{\vspace{-5ex}}
\begin{document}

  \maketitle

\begin{abstract}

In a remarkable paper in 2008, Fyodorov and Bouchaud conjectured an exact formula for the density of the total mass of (sub-critical) Gaussian multiplicative chaos (GMC) associated to the Gaussian free field (GFF) on the unit circle \cite{FyBo}. In this paper we will give a proof of this formula. In the mathematical literature this is the first occurrence of an explicit probability density for the total mass of a GMC measure. The key observation of our proof is that the negative moments of the total mass of GMC determine its law and are equal to one-point correlation functions of Liouville conformal field theory in the disk defined by Huang, Rhodes and Vargas \cite{Disk}. The rest of the proof then consists in implementing  rigorously the framework of conformal field theory (BPZ equations for degenerate field insertions) in a probabilistic setting to compute the negative moments. Finally we will discuss applications to random matrix theory, asymptotics of the maximum of the GFF and tail expansions of GMC.  
  \end{abstract}

 \noindent{\bf Key words:} Gaussian free field, Gaussian multiplicative chaos, Boundary Liouville field theory, Conformal field theory, BPZ equations.

\tableofcontents
  \vspace{0.1cm}
  
\pagebreak

\section{Introduction and main result}

Starting from a Gaussian free field (GFF) one can by standard regularization techniques define the associated Gaussian multiplicative chaos (GMC) measure whose density is formally given by the exponential of the GFF. The theory of GMC goes back to Kahane's 1985 paper \cite{Kah} and has grown into an important field within probability theory and mathematical physics with applications to 3d turbulence, mathematical finance, extreme values of log-correlated processes, disordered systems, random geometry and 2d quantum gravity. See for instance \cite{review} for a review.

In this paper we will be concerned with the last application and more precisely with the link between GMC and the correlation functions of Liouville conformal field theory (LCFT). It is this connection uncovered in 2014 in \cite{Sphere} that enables us to understand the integrability of GMC measures and perform exact computations. The very recent proof of the DOZZ formula \cite{DOZZ1, DOZZ2} can be seen as the first integrability result on fractional moments of GMC measures while our Theorem \ref{FyBo} is the first result that gives an explicit probability density for the total mass of a GMC measure.

We will now introduce the framework of our paper. Let $X$ be a GFF on the unit disk $\mathbb{D}$ with covariance given for $x,y \in \mathbb{D}$ by:\footnote{This is the GFF with Neumann boundary conditions also called the free boundary conditions, see \cite{Disk}.}
\begin{equation}\label{Cov}
 \mathbb{E}[ X(x) X(y)] = \ln \frac{1}{\vert x - y \vert \vert 1 - x \overline{y}\vert }.  
\end{equation} 
In the case of two points $e^{i \theta}$ and $ e^{i \theta'}$ on the unit circle $\partial \mathbb{D}$, this simply reduces to:\footnote{This normalization is different from the $\ln \frac{1}{\vert e^{i \theta} - e^{i \theta'} \vert} $ usually found in the literature.}
\begin{equation}\label{Cov2}
\mathbb{E}[ X(e^{i \theta}) X(e^{i \theta'})] = 2 \ln \frac{1}{\vert e^{i \theta} - e^{i \theta'} \vert}.
\end{equation}
In this setting and for all $ \gamma \geq 0$, the GMC measure on the unit circle is constructed as the following limit in probability in the sense of weak convergence of measures,
\begin{equation}\label{defintro} 
e^{\frac{\gamma}{2} X(e^{i \theta})} d\theta : = \underset{\epsilon \rightarrow 0}{\lim} \: e^{\frac{\gamma}{2} X_{\epsilon}(e^{i \theta}) - \frac{\gamma^2}{8} \mathbb{E} [X_{\epsilon}(e^{i \theta})^2] } d \theta,
\end{equation}
where $d \theta$ is the Lebesgue measure on $[0,2\pi]$ and $X_\epsilon$ is a reasonable cut-off approximation of $X$ which converges to $X$ as $\epsilon$ goes to $0$. More precisely for any continuous test function $f : \partial \mathbb{D} \mapsto \mathbb{R}$, the following holds in probability:
\begin{equation} 
\int_0^{2 \pi} e^{\frac{\gamma}{2} X(e^{i \theta})} f(e^{i \theta}) d\theta = \underset{\epsilon \rightarrow 0}{\lim} \: \int_0^{2 \pi} e^{\frac{\gamma}{2} X_{\epsilon}(e^{i \theta}) - \frac{\gamma^2}{8} \mathbb{E} [X_{\epsilon}(e^{i \theta})^2] } f(e^{i \theta}) d \theta.
\end{equation}
See for instance Berestycki \cite{Ber} for an elegant proof of this convergence. It goes back to Kahane \cite{Kah} that  the measure $e^{\frac{\gamma}{2} X(e^{i \theta})} d\theta $ defined by \eqref{defintro}   is different from $0$ if and only if $\gamma \in [0,2)$. In the sequel, we will always work with $\gamma \in (0,2)$ (with the exception of section \ref{app1} where we will discuss the limit $\gamma \rightarrow 2$). We now introduce the main quantity of interest of our paper, the partition function of the theory, for $\gamma \in (0,2)$:
\begin{equation}
Y_\gamma = \frac{1}{2 \pi} \int_0^{2 \pi} e^{ \frac{\gamma}{2} X(e^{i \theta})} d \theta.
\end{equation}
Recall the following classical fact on the existence of moments for  GMC (see the reviews by Rhodes-Vargas \cite{review, Houches} for instance), for $p \in \mathbb{R}$:
\begin{equation}\label{existence_moments}
\mathbb{E}[ Y_\gamma^p ] < + \infty \: \: \Longleftrightarrow  \: \:  p< \frac{4}{\gamma^2}.
\end{equation}
In 2008 Fyodorov and Bouchaud \cite{FyBo} conjectured an exact formula for the density of $Y_\gamma$ (see also \cite{FLeR, Ostro1} for more conjectures for GMC on the unit interval $[0,1]$\footnote{This case corresponds to $X$ log-correlated on $[0,1]$ meaning that \eqref{Cov2} would become $\mathbb{E}[X(x)X(y)] = 2 \ln \frac{1}{\vert x - y \vert} $.}). Their conjecture is based on the computation of the integer moments of $Y_{\gamma}$ and a clever observation. If $X_\epsilon$ is a reasonable cut-off approximation of $X$ then for all $p$ \emph{nonnegative integer} such that $p <\frac{4}{\gamma^2}$ one gets by Fubini (interchanging  $\int_0^{2 \pi} $ and $\mathbb{E}[.]$):
\begin{align*} 
 \mathbb{E}\left[ \left(\frac{1}{2 \pi}\int_0^{2\pi} e^{\frac{\gamma}{2} X_{\epsilon}(e^{i \theta}) - \frac{\gamma^2}{8} \mathbb{E} [X_{\epsilon}(e^{i \theta})^2] } d \theta \right)^p\right] & = \frac{1}{(2 \pi)^p} \int_{ [0,2 \pi]^p}   \mathbb{E}\left[ \prod_{i=1}^p e^{ \frac{\gamma}{2} X_\epsilon(e^{i \theta_i})-\frac{\gamma^2}{8} \mathbb{E} [X_{\epsilon}(e^{i \theta_i})^2] }  \right] d\theta_1 \dots d \theta_p   \\
& = \frac{1}{(2 \pi)^p} \int_{ [0,2 \pi]^p} e^{ \frac{\gamma^2}{4} \sum_{ i < j} \mathbb{E}[X_\epsilon(e^{i \theta_i}) X_\epsilon(e^{i \theta_j}) ] } d\theta_1 \dots d\theta_p 
\end{align*}
By taking the limit in the above computation as $\epsilon$ goes to $0$ one gets:
 \begin{align*} 
\mathbb{E}[ Y_\gamma^p] & =   \frac{1}{(2 \pi)^p} \int_{ [0,2 \pi]^p} e^{ \frac{\gamma^2}{4} \sum_{ i < j} \mathbb{E}[X(e^{i \theta_i}) X(e^{i \theta_j}) ] } d\theta_1 \dots d\theta_p \\
&  = \frac{1}{(2 \pi)^p} \int_{ [0,2 \pi]^p} \prod_{i<j} \frac{1}{\vert e^{i \theta_i} - e^{i \theta_j} \vert^{ \frac{\gamma^2}{2}} } d\theta_1 \dots d\theta_p.   
\end{align*}

 The main observation of  Fyodorov and Bouchaud \cite{FyBo} is that the last integral above is a circular variant of the famous Selberg integral, the so-called Morris integral, and its value is explicitly known as $\frac{\Gamma(1 - p \frac{\gamma^2}{4})}{\Gamma(1 - \frac{\gamma^2}{4})^p}$. Hence, one gets for $p$ nonnegative integer such that $p <\frac{4}{\gamma^2}$: 
\begin{equation}\label{keyobs}
\mathbb{E}[ Y_\gamma^p] = \frac{\Gamma(1 - p \frac{\gamma^2}{4})}{\Gamma(1 - \frac{\gamma^2}{4})^p}.
\end{equation}
Fyodorov and Bouchaud then conjectured that the identity \eqref{keyobs}  remains valid if $p$ is any real number such that $p< \frac{4}{\gamma^2}$. Though the conjecture is reasonable, one should notice that it is far from obvious. Indeed both sides of \eqref{keyobs}  are analytic functions of $p$ equal on the finite set $\lbrace 0, 1, \cdots, \lfloor \frac{4}{\gamma^2} \rfloor \rbrace $ (where $ \lfloor . \rfloor$ denotes integer part) and this does not guarantee that they are equal on their domain of definition. The main result of this paper is precisely to prove this point:
\begin{theorem} (Fyodorov-Bouchaud formula) \label{FyBo}
Let $\gamma \in (0,2)$. For all real $p$ such that $p < \frac{4}{\gamma^2}$ the following identity holds:
\begin{equation}\label{maintheorem} 
\mathbb{E}[ Y_\gamma ^p ] =  \frac{\Gamma(1 - p \frac{\gamma^2}{4})}{\Gamma(1 - \frac{\gamma^2}{4})^p}.  
\end{equation}
Equivalently, we can state that $Y_{\gamma} $ follows the law
\begin{equation}\label{law_Yg}
Y_{\gamma} \sim \frac{1}{\Gamma(1 - \frac{\gamma^2}{4})} \mathcal{E}(1)^{- \frac{\gamma^2}{4}}
\end{equation}
where $\mathcal{E}(1) $ is an exponential law of parameter $1$.
\end{theorem}
From \eqref{law_Yg} the variable $Y_\gamma$ thus has an explicit density given by,
\begin{equation}\label{densityY}
 f_{Y_\gamma}(y) =  \frac{4 \beta }{\gamma^2} ( \beta y )^{ - \frac{4}{\gamma^2}-1} e^{ -( \beta y)^{-\frac{4}{\gamma^2} }} \mathbf{1}_{[0, \infty[ }(y),
\end{equation}
where we have set $\beta = \Gamma(1 - \frac{\gamma^2}{4}) $. This density illustrates two universal behaviours of GMC measures. First, the probability for $Y_{\gamma}$ to be large is governed by the term $ ( \beta y )^{ - \frac{4}{\gamma^2}-1}$. The power $- \frac{4}{\gamma^2}-1 $ is of course compatible with the existence of moments \eqref{existence_moments} and is a well-understood feature of GMC measures which holds for more general covariances and in other dimensions, see section \ref{app3} as well as \cite{review2} for more details. The probability for $Y_{\gamma}$ to be small is given by the term $ \exp ( -( \beta y)^{-\frac{4}{\gamma^2} }) $ which is extremely small and implies that the negative moments of $Y_{\gamma}$ determine its law, a key ingredient of our proof. Progress has been made only very recently in \cite{Mabuchi} to explain the universality behind the probability for a GMC to be small (see also the discussion of \cite{RZhu} in the case of the unit interval).

Before explaining the main ideas behind the proof and the connection with Liouville conformal field theory, we will first enumerate the numerous applications of this result.\\

\noindent

\emph{Acknowledgements}: I would first like to thank R\'emi Rhodes and Vincent Vargas for making me discover Liouville theory and for all their support throughout this project. I would also like to thank Tunan Zhu for many fruitful discussions and for his help on some complicated calculations. Lastly I am grateful to Juhan Aru, Nicolas Curien, Yan Fyodorov, Jon Keating, Gaultier Lambert, Ellen Powell, Avelio Sep\'ulveda, Mo Dick Wong, as well as to the anonymous referees for all their comments that helped me improve this paper.

\subsection{Applications}\label{applications}

\subsubsection{Critical GMC and the maximum of the GFF on the circle}\label{app1}
A problem that has attracted a lot of attention is the behaviour of the maximum of the Gaussian free field on the unit circle. We refer the reader to \cite{Arguin} for a review on the study of extrema of log-correlated processes. The link with GMC theory goes as follows, it is possible to make sense of GMC in the critical case $\gamma=2$ by the so-called derivative martingale construction. In this case, the measure denoted by $-\frac{1}{2} X(e^{i \theta})e^{X(e^{i \theta})}d \theta$ is obtained as the following limit,
\begin{equation}\label{derivconst}
- \frac{1}{2} X(e^{i \theta})e^{X(e^{i \theta})}d \theta := - \underset{\epsilon \to 0} {\lim}   \left(\frac{1}{2} X_\epsilon(e^{i \theta})-  \frac{1}{2} \mathbb{E} [X_\epsilon(e^{i \theta})^2] \right) e^{X_\epsilon(e^{i \theta}) -\frac{1}{2} \mathbb{E}[X_\epsilon(e^{i \theta})^2  ]} d\theta,
\end{equation}
where $X_\epsilon$ is a reasonable cut-off approximation of $X$ which converges to $X$ as $\epsilon$ goes to $0$. The construction \eqref{derivconst} converges to a non trivial random positive measure. This was proved  in \cite{DRSV1,DRSV2} for specific cut-offs $X_\epsilon$ and generalized to general cut-offs in \cite{Powell}. We now introduce: 
\begin{equation}\label{defY'}
 Y'  := - \frac{1}{2} \int_0^{2 \pi} X(e^{i \theta})e^{X(e^{i \theta})} d \theta. 
\end{equation}
It is natural to expect that $Y'$ can be obtained from the sub-critical measures $Y_{\gamma}$ as $\gamma$ goes to $2$ by taking a suitable limit. Indeed it is shown in \cite{APS} that the following holds in probability:
\begin{equation}\label{Othercons}
2 Y'=  \underset{\gamma \to 2}{\lim} \: \frac{1}{2 - \gamma} Y_\gamma.
\end{equation}
From this convergence and Theorem \ref{FyBo}, one can deduce that $2Y'$ has a density $f_{2Y'}$ given by
\begin{equation}\label{}
f_{2Y'}(y)=  y ^{ -2} e^{ -y^{-1 }} \mathbf{1}_{[0, \infty[ }(y).
\end{equation}  
We observe that $\ln 2 Y'$ is distributed like a standard Gumbel law. Recall that an impressive series of works (see \cite{BiLoui,DiRoZei} for the latest results)   have proven that for suitable sequences of cut-off approximations $X_\epsilon$ the following convergence in law  holds
\begin{equation}\label{Convmax}
\max_{ \theta \in [0,2 \pi]}   X_\epsilon(e^{i \theta})- 2 \ln \frac{1}{\epsilon} + \frac{3}{2} \ln \ln \frac{1}{\epsilon}  \underset{\epsilon \to 0}{\rightarrow}  \mathcal{G}+ \ln Y' + C
\end{equation}  
where $\mathcal{G}$ is a Gumbel law independent from $Y'$ and $C$ is a non universal constant that depends on the cut-off procedure. From this convergence and previous considerations, one can deduce the following convergence in law

\begin{equation}\label{Convmaxprecise}
\max_{ \theta \in [0,2 \pi]}   X_\epsilon(e^{i \theta})- 2\ln \frac{1}{\epsilon} + \frac{3}{2} \ln \ln \frac{1}{\epsilon}  \underset{\epsilon \to 0}{\rightarrow}  \mathcal{G}_1 + \mathcal{G}_2 + C
\end{equation}  
where $\mathcal{G}_1$ and $\mathcal{G}_2$ are two independent Gumbel laws and where we have absorbed the factor $\ln 2$ in the constant $C$. This convergence was conjectured in Fyodorov-Bouchaud \cite{FyBo}. As a matter of fact, Fyodorov-Bouchaud state \eqref{Convmaxprecise} as their main result.\footnote{More accurately they expressed the limit density in terms of a modified Bessel function which was noticed by Subag and Zeitouni in \cite{SuZei} to be the sum of two independent Gumbel laws.}  Mathematically, it is the first occurrence of an explicit formula for the limit density of the properly recentered maximum of a GFF.
We nonetheless point out that at the present moment some technical results still need to be rigorously written down in order for \eqref{Convmaxprecise} to be a theorem. In fact, the works \cite{BiLoui,DiRoZei} establish the convergence result \eqref{Convmax} with a variable $Y'$ which has not yet been rigorously proved to be the same as our definition \eqref{defY'} of $Y'$. Nonetheless, private communications with the authors of \cite{BiLoui,DiRoZei} confirm that their methods can be extended to prove that both definitions of $Y'$ coincide.

\subsubsection{Unitary random matrix theory}\label{app2}
A similar story can be told for unitary random matrices. Contrarily to the previous subsection, we emphasize here that novel ideas are still missing in order to be able to use our Theorem \ref{FyBo} to prove the following statements, especially for the analogue of \eqref{Convmaxprecise}. Let $U_N$ denote the $N \times N$ random matrices distributed according to the Haar probability measure on the unitary group $U(N)$. Denoting by $( e^{i \theta_1}, \dots, e^{i \theta_n})$ the eigenvalues of $U_N$, we consider its characteristic polynomial $p_N(\theta)$ evaluated on the unit circle at a point $e^{i\theta}$:
\begin{equation}
 p_N( \theta) = \det ( 1 - e^{- i \theta} U_N) = \prod_{ k=1}^N ( 1 - e^{ i (\theta_k - \theta) }).
\end{equation}
Recently, the following convergence in law has been obtained in \cite{Webb, webb2} for a real $\alpha \in ( -\frac{1}{2}, 2)$:
\begin{equation}\label{analogy}
\frac{\vert p_N(\theta) \vert^{\alpha} }{\mathbb{E}[\vert p_N(\theta) \vert^{\alpha}]} d \theta \underset{N \rightarrow \infty}{\rightarrow} e^{\frac{\vert \alpha \vert}{2} X(e^{i \theta}) } d \theta.
\end{equation} 
This convergence seems to indicate that $2 \ln \vert p_N(\theta) \vert $ should be seen as a cut-off of $X$ just like our $X_{\epsilon}$ with $N$ corresponding to $\frac{1}{\epsilon}$. We thus expect to have for a real $p < \frac{4}{\alpha^2}$:\footnote{This convergence of moments is a consequence of the works \cite{Webb, webb2} in the case where $p \in [0,1]$ or $p \in [0,2]$ when the second moment is finite. The convergence for other values of $p$ seems to require novel estimates.}
\begin{equation}\label{conj_kea}
\mathbb{E}\left[ \left( \frac{1}{2 \pi} \int_0^{2 \pi} \frac{\vert p_N(\theta) \vert^{\alpha} }{\mathbb{E}[\vert p_N(\theta) \vert^{\alpha}]} d \theta\right)^p \right] \underset{N \rightarrow \infty}{\rightarrow} \mathbb{E}\left[ \left(\frac{1}{2 \pi} \int_0^{2 \pi} e^{ \frac{\vert \alpha \vert }{2} X(e^{i \theta})} d \theta \right)^p \right] = \frac{\Gamma(1 - p \frac{\alpha^2}{4})}{\Gamma(1 - \frac{\alpha^2}{4})^p}
\end{equation} 
Notice that $\mathbb{E}[\vert p_N(\theta) \vert^{\alpha} ]$ is independent of $\theta$. Now the following asymptotic is known for $\alpha > -1$,
\begin{equation}\label{conj_kea2}
\mathbb{E}[\vert p_N(\theta) \vert^{\alpha} ] \underset{N \rightarrow \infty}{\sim} \frac{G(1 + \alpha/2)^2}{G(1 + \alpha)} N^{\frac{\alpha^2}{4}}, 
\end{equation}
where $G$ is the so-called Barnes' function. Assuming \eqref{conj_kea} to hold along with \eqref{conj_kea2} establishes the asymptotic conjectured in \cite{FyHiKe} and further studied in \cite{Lambert}, for a real $p < \frac{4}{\alpha^2}$: 
\begin{equation}
\mathbb{E}\left[ \left( \frac{1}{2 \pi} \int_0^{2 \pi} \vert p_N(\theta) \vert^{\alpha}  d \theta\right)^p \right] \underset{N \rightarrow \infty}{\sim} \frac{G(1 + \alpha/2)^{2p}}{G(1 + \alpha)^p} \frac{\Gamma(1 - p \frac{\alpha^2}{4})}{\Gamma(1 - \frac{\alpha^2}{4})^p} N^{ \frac{p \alpha^2}{4}}.
\end{equation}
Now again based on the analogy suggested by \eqref{analogy}, it is reasonable that the properly shifted maximum of $2 \ln \vert p_N( \theta) \vert$ should converge to the same limit as the (properly shifted) maximum of the GFF on the circle. Indeed it has been recently conjectured by Fyodorov, Hiary and Keating \cite{FyHiKe} that the following convergence in law should hold
\begin{equation}\label{Convmaxpn}
\max_{ \theta \in [0,2 \pi]}   \ln | p_N(\theta)  |- \ln N +  \frac{3}{4} \ln \ln N  \underset{N \to \infty}{\rightarrow}  \mathcal{G}_1+ \mathcal{G}_2+C
\end{equation}  
where $\mathcal{G}_1$ and $\mathcal{G}_2$ are again two independent Gumbel laws and $C$ a real constant. On the mathematical side, there has been a series of works \cite{ArBeBour,PaZei,ChMaNaj} aiming at this result. The most recent result \cite{ChMaNaj} establishes that 
\begin{equation}
\max_{ \theta \in [0,2 \pi]}   \ln | p_N(\theta)  |- \ln N + \frac{3}{4} \ln \ln N
\end{equation} 
is tight. Just like for the GFF it is natural to expect that the following convergence is easier to establish directly
\begin{equation}\label{Convmaxpnbis}
\max_{ \theta \in [0,2 \pi]}   \ln | p_N(\theta)  |- \ln N + \frac{3}{4} \ln \ln N  \underset{N \to \infty}{\rightarrow}  \mathcal{G}_1+ \ln Y'+C.
\end{equation}  
Our result could then prove instrumental in precisely identifying the limit in the conjectured  convergence \eqref{Convmaxpn}.

\subsubsection{The tail of GMC in dimension 1}\label{app3}

Finally, Rhodes and Vargas in \cite{tail} introduced a simple method to compute tail expansions for general GMC measures. More precisely, in the 1d case, consider
a log-correlated field $\tilde{X}$ on an open set $\mathcal{O} \subset \partial \mathbb{D}$ with the following covariance,
\begin{equation}
\mathbb{E}[\tilde{X}(e^{i \theta}) \tilde{X}(e^{i \theta'})] = 2 \ln \frac{1}{\vert e^{i \theta} -e^{i \theta'} \vert} +f(e^{i \theta},e^{i \theta'}), 
\end{equation}
for a smooth function $f$. The authors of \cite{tail} showed that the following holds for some $\delta >0$,
\begin{equation}\label{expansion1d}
 \mathbb{P}\left( \int_{\mathcal{O} } e^{ \frac{\gamma}{2} \tilde{X}(e^{i \theta})  } d \theta > t \right) \underset{t \rightarrow \infty}{=} \left( \int_{ \mathcal{O}} e^{ ( \frac{4}{\gamma^2} -1)  f(e^{i \theta},e^{i \theta})} d \theta \right) ( 1 -\frac{\gamma^2}{4})
\frac{R_1(\gamma)}{t^{\frac{4}{\gamma^2}}} +
o(t^{- \frac{4}{\gamma^2} - \delta}),
\end{equation}
where $\mathcal{O}$ is an open subset of $\partial \mathbb{D}$ and $R_1(\gamma)$ is a non explicit universal constant defined in terms of the expectation of a random variable. 
Since Theorem \ref{FyBo} gives an explicit tail expansion for $Y_\gamma$ and the variable $Y_\gamma$ has a tail expansion which satisfies \eqref{expansion1d}, one can deduce an explicit value for $R_1(\gamma)$. This leads to 
\begin{equation}
R_1(\gamma) = \frac{(2 \pi)^{ \frac{4}{\gamma^2} -1  }  }{(1 -
\frac{\gamma^2}{4} ) \Gamma(1 - \frac{\gamma^2}{4})^{ \frac{4}{\gamma^2} }
 }  .
\end{equation}
See also the discussion of \cite{RZhu} on the reflection coefficients for tail expansions of GMC.

\subsection{Strategy of the proof}
To explain the ideas behind our proof of Theorem \ref{FyBo}, we must make a detour in the world of conformal field theory (CFT). In 2014, David, Kupiainen, Rhodes and Vargas \cite{Sphere} applied the theory of GMC to define rigorously Liouville conformal field theory (LCFT) on the Riemann sphere. This theory was first introduced by A. Polyakov in his 1981 seminal paper \cite{Pol} where he proposed a path integral theory of random two dimensional surfaces. In \cite{Sphere} the authors discovered that the correlation functions of LCFT could be expressed as fractional moments of GMC measures with log singularities therefore rendering possible the mathematical study of LCFT. The theory was defined on the Riemann sphere in \cite{Sphere} then on the unit disk in \cite{Disk} and on other surfaces in \cite{Tori}, \cite{Annulus}. Let us also mention the landmark paper \cite{DuSh} by Duplantier-Sheffield establishing the first  probabilistic formulation of the KPZ formula of physics using GMC type measures. This line of work was then pursued further by Duplantier-Miller-Sheffield \cite{Mating} which develops a theory of quantum surfaces equipped with a natural Liouville quantum gravity measure. For a connection with the framework of LCFT presented in section \ref{CFT}, see for instance \cite{Aru} or \cite[Lecture 2]{ReviewDOZZ}.

Since Liouville theory is a CFT, one expects that it is possible to use the framework developed by Belavin, Polyakov and Zamolodchikov (BPZ) in \cite{BPZ} to compute explicitly its correlation functions. As a matter of fact, the original motivation of BPZ for introducing CFT was to compute the correlations of LCFT although it has now grown into a huge field of theoretical physics. Recently, Kupiainen, Rhodes and Vargas were indeed able to rigorously implement the BPZ framework for LCFT in a probabilistic setting. As an output of their constructions, they gave a proof of the celebrated DOZZ formula \cite{DOZZ1,DOZZ2}  for the three-point function of LCFT whose value was conjectured independently by Dorn and Otto in \cite{DO} and by Zamolodchikov and Zamolodchikov in \cite{ZZ}. 

Concerning the strategy of our proof, the key observation is to realize that the inverse moments of GMC integrated on the unit circle can be expressed as one-point correlation functions of LCFT on the unit disk. This link was to the best of our knowledge unknown even to physicists. Thanks to this observation, we can develop the framework of CFT to compute the inverse moment using a strategy similar to the proof of the DOZZ formula \cite{DOZZ1,DOZZ2}. However, working on a domain with boundary requires to introduce a novel BPZ differential equation - see Theorem \ref{BPZequa} below - which differs from the equation of \cite{DOZZ1} on the Riemann sphere.

Let us now introduce some notations which will be used in the sequel. Let $p$ be a real number such that $p< \frac{4}{\gamma^2}$. We denote:
\begin{equation}
U( \gamma, p ) = \mathbb{E}\left[ \left(\int_0^{2 \pi} e^{ \frac{\gamma}{2} X(e^{i \theta})} d \theta\right)^p \right].
\end{equation}
The proof is based on studying the following function or ``observable" defined for $t \in [0,1]$:
\begin{equation}\label{defGp}
G(\gamma, p, t) = \mathbb{E}\left[ \left(\int_0^{2 \pi} \vert t - e^ {i \theta} \vert^{\frac{\gamma^2}{2}  } e^{ \frac{\gamma}{2} X(e^{i \theta})} d \theta \right)^p \right].
\end{equation} 
At first glance, it can seem mysterious why the introduction of $G(\gamma, p, t)$ can be of any help in computing $U( \gamma, p )$. In order to understand why $G(\gamma, p, t)$ is the ``right" auxiliary function to look at, one must cast the problem in the language of LCFT with boundary. It turns out that the moment $U( \gamma, p )$ is the one-point correlation function and that the function $G(\gamma, p, t)$ is the two-point correlation function with a so-called degenerate field insertion, see section \ref{CFT} for the definitions.  Therefore the function $G(\gamma, p, t)$ is expected to obey a differential equation known as the BPZ equation. Indeed, we will prove in section \ref{CFT} using probabilistic techniques that:

\begin{proposition}\label{EquaG}
(BPZ) For $\gamma \in (0,2)$ and $p<0$ the function $ t \mapsto G(\gamma,p, t) $ satisfies the following differential equation:
$$ \left( t (1 - t^2) \frac{\partial^2}{\partial t^2} +(t^2-1)\frac{\partial}{\partial t} + 2( C - (A +B+1)t^2) \frac{\partial}{\partial t} - 4ABt \right) G(\gamma,p, t) = 0    $$
with the following values for $A$, $B$, and $C$:
$$ A = -\frac{\gamma^2 p}{4}, \text{  } B = - \frac{\gamma^2}{4}, \text{  }  C = \frac{\gamma^2}{4}(1 - p) + 1.  $$
\end{proposition}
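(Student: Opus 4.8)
The plan is to recognize $G(\gamma,p,t)$ as a Liouville correlation function carrying a degenerate field insertion, and then to derive the differential equation by the probabilistic implementation of the BPZ null-vector argument, following the strategy used for the DOZZ formula in \cite{DOZZ1,DOZZ2} but adapted to the boundary setting. First I would explain the appearance of the weight $|t-e^{i\theta}|^{\frac{\gamma^2}{2}}$. By the Girsanov (Cameron--Martin) theorem, inserting the normalized exponential $e^{-\frac{\gamma}{2} X(t)}$ at an interior point $t\in[0,1)$ shifts the field by $-\frac{\gamma}{2}\mathbb{E}[X(t)X(\cdot)]$; since the bulk-to-boundary covariance is $\mathbb{E}[X(t)X(e^{i\theta})]=2\ln\frac{1}{|t-e^{i\theta}|}$, each boundary exponential $e^{\frac{\gamma}{2}X(e^{i\theta})}$ picks up exactly the factor $|t-e^{i\theta}|^{\frac{\gamma^2}{2}}$. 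Thus $G$ is, up to a deterministic $t$-independent renormalization constant, the two-point function obtained by adding the degenerate insertion $e^{-\frac{\gamma}{2}X(t)}$ to the boundary GMC, and $\alpha=-\frac{\gamma}{2}$ is the level-two degenerate value of the Kac table, which is the structural reason to expect a second-order equation. For $t\in[0,1)$ the point is strictly inside $\mathbb{D}$, so $|t-e^{i\theta}|\geq 1-t>0$ and the integrand is smooth in $t$; I would establish the equation on $(0,1)$ and extend by continuity.

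Writing $H(t)=\int_0^{2\pi}|t-e^{i\theta}|^{\frac{\gamma^2}{2}}e^{\frac{\gamma}{2}X(e^{i\theta})}d\theta$ so that $G=\mathbb{E}[H^p]$, I would next differentiate twice under the expectation:
$$ \partial_t^2 G = p(p-1)\,\mathbb{E}[H^{p-2}(\partial_t H)^2] + p\,\mathbb{E}[H^{p-1}\partial_t^2 H]. $$
The regularity above and the restriction $p<0$ make this interchange legitimate once one has uniform control of the negative moments of $H$ and of its $t$-derivatives: for $p<0$ the factors $H^{p-1},H^{p-2}$ are bounded on $\{H\geq\delta\}$ and the small-$H$ contribution is controlled by the explicit lower tail of the GMC mass. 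The term $\mathbb{E}[H^{p-2}(\partial_t H)^2]$ is a genuine double integral over $(\theta,\theta')$ and is not, as it stands, expressible through $G$ and $\partial_t G$; removing it is the whole game.

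The key step is Gaussian integration by parts. Rather than differentiating only the deterministic weight, I would keep the degenerate insertion as $e^{-\frac{\gamma}{2}X_\epsilon(t)}$ at the regularized level, so that $\partial_t$ brings down a factor $-\frac{\gamma}{2}\partial_t X_\epsilon(t)$, a centred Gaussian. Applying the Gaussian integration-by-parts formula to $\partial_t X_\epsilon(t)$ converts the stochastic term into $\int \partial_t\,\mathbb{E}[X(t)X(y)]$ times field insertions, i.e.\ into integrals of $\partial_t\ln\frac{1}{|t-e^{i\theta}|}$ against the GMC. Using the \emph{explicit} form of the covariance --- in particular that $\ln\frac{1}{|t-e^{i\theta}|}$ is, up to the harmonic image term, the Green's function of the disk, so that its $t$-derivatives obey exact algebraic identities --- these reconstituted integrals collapse back onto $H$ and $\partial_t H$. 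The arithmetic of the level-two degenerate weight $\Delta_{-\gamma/2}$ is precisely what makes the coefficient of the irreducible double integral vanish, leaving a closed second-order relation; a direct but lengthy computation then matches the coefficients to $A=-\frac{\gamma^2 p}{4}$, $B=-\frac{\gamma^2}{4}$, $C=\frac{\gamma^2}{4}(1-p)+1$.

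The main obstacle is analytic rather than algebraic: one must justify every interchange of $\epsilon\to0$ with differentiation and expectation, and perform the Gaussian integration by parts rigorously despite the logarithmically singular covariance and the singular (distributional) nature of the GMC and of its $t$-derivatives. Concretely, the hardest points are the uniform-in-$\epsilon$ bounds on $\mathbb{E}[H_\epsilon^{p-2}(\partial_t H_\epsilon)^2]$ and related quantities for $p<0$, ensuring the limiting identity is the differentiated one, together with the control of the diagonal $\theta\to\theta'$ contributions produced by the integration by parts. A secondary point is the reduction of what is \emph{a priori} a bulk BPZ equation in the two variables $t,\bar t$ to the single real ODE stated here; this I would obtain from the reflection symmetry of the covariance under $\theta\mapsto-\theta$ combined with the reality of $t\in[0,1)$, which forces the holomorphic and anti-holomorphic derivatives to coincide and yields the hypergeometric-type operator above.
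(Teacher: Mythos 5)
There is a genuine gap, and it sits exactly at the step you label ``a direct but lengthy computation.'' Your strategy is the direct route: differentiate $G$ under the expectation at real $t$, integrate by parts the Gaussian factor $\partial_t X_\epsilon(t)$, and assert that the level-two degenerate weight makes the irreducible double-integral term (of the type $\mathbb{E}[H^{p-2}(\partial_t H)^2]$) close back onto $G$ and $\partial_t G$. The paper explicitly warns (end of Section 2.3) that this direct computation on the circle leaves ``a few terms coming from the second derivative in $t$ for which it is very difficult to see that they equal zero,'' and that completing it ``is just as complicated as proving Theorem \ref{FyBo}.'' Moreover, the mechanism you invoke is not the one that actually produces the cancellation. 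In the paper's proof of Theorem \ref{BPZequa}, carried out on $\mathbb{H}$ with \emph{two bulk insertions at general complex positions} $z,z_1$, the double integral $\int\int \frac{1}{z-s}\frac{1}{z-t}\langle s,t,z,z_1\rangle\,ds\,dt$ generated by $\partial_{zz}$ in \eqref{double1} is killed not by weight arithmetic but by an integration by parts in the \emph{boundary} variable $s$ followed by the symmetrization identity $\frac{1}{z-s}-\frac{1}{z-t}=\frac{s-t}{(z-s)(z-t)}$, see \eqref{double2}; the degenerate-weight arithmetic only handles the $(z-\bar z)^{-2}$ terms, and Lemma \ref{lem1} (which needs the zero mode $c$ and the cosmological constant $\mu_\partial$ of the full LCFT framework) is needed to cancel the metric-dependent terms. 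None of this algebraic structure is available once you freeze the degenerate point at real $t$ and work with the single function $G$; the partial-fraction identities that make the cross terms vanish use the complex positions $z,\bar z, z_1,\bar z_1$ in an essential way.

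A second, independent problem is your reduction of the ``a priori bulk BPZ equation in $t,\bar t$'' to the stated ODE. Reflection symmetry and reality of $t$ only identify holomorphic and antiholomorphic derivatives of the \emph{degenerate} insertion; the BPZ operator also contains the first-order terms $\frac{1}{z-z_1}\partial_{z_1}+\frac{1}{z-\overline{z_1}}\partial_{\overline{z_1}}$ in the position of the \emph{second} insertion $V_\alpha$ and its image, and rotation invariance tells you nothing about these. Eliminating them requires the full M\"obius covariance of the correlation functions --- the KPZ relation \eqref{KPZ} of \cite{Disk} --- which the paper singles out as ``a highly non trivial change of variable'' and a key ingredient; it is only after this reduction that one lands on a second-order ODE in $t$. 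Related to this, your claim that $G$ equals the two-point function ``up to a deterministic $t$-independent renormalization constant'' is incorrect: by \eqref{link1} they differ by the $t$-\emph{dependent} factor $t^{\frac{\alpha\gamma}{2}}(1-t^2)^{-\frac{\gamma^2}{8}}$ (the image term in $\mathbb{E}[X_\epsilon(t)^2]$ is not constant), and it is precisely conjugation by this prefactor that turns the BPZ equation into the hypergeometric form stated in Proposition \ref{EquaG}; ignoring it would make your final coefficient matching come out wrong.
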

Here the hypothesis on $p$ is purely technical and could be relaxed with little effort. A simple change of variable $x = t^2$ and $G(\gamma,p, t) = H(x)$ turns the BPZ equation for $G(\gamma,p, t)$ into a hypergeometric equation for $H(x)$
$$ \left( x(1-x) \frac{\partial^2}{\partial x^2} + (C -(A +B +1)x) \frac{\partial}{\partial x} - AB \right) H(x) = 0. $$
The solution space of this equation is two dimensional. In fact, we can give two sets of solutions, one corresponding to an expansion in powers of $x$ and the other to an expansion in powers of $1 - x$. From the theory of hypergeometric functions recalled in appendix \ref{apphyp} we thus obtain:

\begin{proposition}\label{CB}
For $\gamma \in (0,2)$ with $\gamma \neq \sqrt{2}$\footnote{The value $\gamma = \sqrt{2}$ is excluded here for technical reasons corresponding to poles in the Gamma functions of $F$ and of \eqref{hpy1}. This value will be recovered in the proof of Proposition \ref{Shift} by a standard continuity argument in $\gamma$.} and $p<0$, we have
$$ G(\gamma, p , t) = C_1 F( -\frac{\gamma^2 p}{4} , - \frac{\gamma^2}{4}, \frac{\gamma^2}{4}(1 -p) +1,t^2 ) + C_2 t^{\frac{\gamma^2}{2} (p-1) } F( - \frac{\gamma^2}{4}, \frac{\gamma^2}{4} (p-2) , \frac{\gamma^2}{4}(p-1) + 1,t^2 ) $$
and
$$ G(\gamma, p , t) = B_1 F( - \frac{\gamma^2 p}{4} , - \frac{\gamma^2 }{4}, - \frac{\gamma^2}{2},1 -t^2) + B_2 (1-t^2)^{ 1 + \frac{\gamma^2}{2}    } F ( 1 +  \frac{\gamma^2}{4}, \frac{\gamma^2}{4} (2-p) +1 , 2 + \frac{\gamma^2}{2},1 -t^2 ) $$
where $F$ is the standard hypergeometric series. The coefficients $C_1$, $C_2$, $B_1$ and $B_2$ are real constants that depend on $\gamma$ and $p$. Since the solution space of the hypergeometric equation is two dimensional, the coefficients  are linked by  the explicit change of basis formula \eqref{hpy1} written in appendix \ref{apphyp}.
\end{proposition}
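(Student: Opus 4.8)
The plan is to treat Proposition \ref{CB} as essentially a consequence of the classical Frobenius theory for the hypergeometric equation, using Proposition \ref{EquaG} as the only probabilistic input. First I would record that, after the substitution $x = t^2$ and $H(x) = G(\gamma,p,t)$, Proposition \ref{EquaG} shows that $H$ is a genuine $C^2$ solution on $(0,1)$ of
\[
x(1-x) H''(x) + \bigl(C - (A+B+1)x\bigr) H'(x) - AB\, H(x) = 0,
\]
with $A = -\frac{\gamma^2 p}{4}$, $B = -\frac{\gamma^2}{4}$, $C = \frac{\gamma^2}{4}(1-p)+1$. The required regularity of $t \mapsto G(\gamma,p,t)$ comes for free from the argument establishing the BPZ equation. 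Since this is a second order linear ODE with regular singular points at $0,1,\infty$, its solution space on $(0,1)$ is two-dimensional, so $H$ coincides with any prescribed linear combination of a chosen basis; the whole content of the proposition is to identify two convenient bases and the map between them.

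Second, I would write down the two standard local bases. Near $x=0$ the indicial exponents are $0$ and $1-C$, and the Frobenius pair is $F(A,B,C,x)$ together with $x^{1-C}F(A-C+1, B-C+1, 2-C, x)$; substituting the values of $A,B,C$ and using $x^{1-C}=t^{\frac{\gamma^2}{2}(p-1)}$ reproduces the $C_1,C_2$ expansion, which is a routine simplification of the shifted parameters. Near $x=1$ the exponents are $0$ and $C-A-B = 1+\frac{\gamma^2}{2}$, and the analogous pair $F(A,B,A+B-C+1,1-x)$ and $(1-x)^{C-A-B}F(C-A,C-B,C-A-B+1,1-x)$ reproduces, after the same substitution, the $B_1,B_2$ expansion. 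Because $H$ solves the equation, it lies in the span of each basis, which yields the existence of the constants $C_1,C_2$ and $B_1,B_2$.

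Third, I would check convergence and produce the change of basis. The series $F(\cdot,\cdot,\cdot,x)$ converges for $x \in (0,1)$, i.e.\ $t\in(0,1)$, and since $C-A-B = 1+\frac{\gamma^2}{2}>0$ the expansions remain finite up to the endpoint $x=1$, so both representations are valid on $(0,1)$ (and at $t=1$). The linear relation between $(C_1,C_2)$ and $(B_1,B_2)$ is then nothing but the classical connection formula expressing the hypergeometric solutions at $0$ in terms of those at $1$ (Kummer's relations), which I would record as \eqref{hpy1} in the appendix, reading off the coefficients in terms of ratios of Gamma functions in $A,B,C$.

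Finally, the one point that genuinely needs care --- and which I expect to be the main obstacle --- is the degenerate parameter configurations in which the indicial exponents differ by an integer and the ``second'' Frobenius solution acquires a logarithm, so that the pure hypergeometric form fails. This happens exactly when $1-C = -\frac{\gamma^2}{4}(1-p) \in \mathbb{Z}$ (the $x=0$ basis) or when $C-A-B = 1+\frac{\gamma^2}{2} \in \mathbb{Z}$, e.g.\ $\gamma=\sqrt{2}$ (the $x=1$ basis). For fixed $\gamma$ these exceptional $p$ form a discrete set, so the representation holds for all but countably many $p<0$; since $G(\gamma,p,t)$ and the hypergeometric functions are analytic in $p$, I would extend the identity to the exceptional values by continuity in $p$. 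In any case, for the proof of Theorem \ref{FyBo} it suffices to work at generic $(\gamma,p)$, so this subtlety does not affect the main argument.
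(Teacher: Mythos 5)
Your proposal is correct and takes essentially the same route as the paper: Proposition \ref{CB} is obtained there exactly as you describe, by converting the equation of Proposition \ref{EquaG} into the hypergeometric equation via $x=t^2$ and invoking the two Frobenius bases together with the connection formula \eqref{hpy2}--\eqref{hpy1} recorded in the appendix. One caveat on your degenerate-case patch: at $\gamma=\sqrt{2}$ the $x=1$ expansion is logarithmic for \emph{every} $p<0$ (so the exceptional set is not discrete in $p$ there), hence continuity in $p$ cannot recover that case; the paper instead simply leaves $\gamma=\sqrt{2}$ out of the argument and restores the final result by continuity of $\gamma\mapsto U(\gamma,p)$ at the level of the shift equation \eqref{shift3}.
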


The end of the proof performed in section \ref{sec_shift} is based on exploiting the fact that the above coefficients $C_1$, $C_2$, $B_1$ and $B_2$  can be identified in terms of $U( \gamma, p )$ by performing asymptotic expansions directly on the expression \eqref{defGp} of $G(\gamma, p, t)$. Notice for instance that $ C_1 = G(\gamma,p,0) = U(\gamma,p)$. We also express $B_2$ in terms of $U( \gamma, p-1)$ and find $C_2 =0$. Using the change of basis formula \eqref{hpy1} we thus show at the end of section \ref{sec_shift} the following shift equation for $U( \gamma, p )$: 

\begin{proposition}\label{Shift}
For all $ \gamma \in (0,2)$ and for $p \leq 0$, we have the relation
\begin{equation}\label{Shift_U}
U(\gamma, p ) =  \frac{2 \pi \Gamma(1 - p \frac{\gamma^2}{4} ) }{ \Gamma(1 - \frac{\gamma^2}{4}) \Gamma(1 - (p-1) \frac{\gamma^2}{4} )} U(\gamma, p-1 ).
\end{equation} 
\end{proposition}

From this shift equation we deduce recursively all the positive moments of the variable $\frac{1}{Y_\gamma}$, i.e. we get 
\begin{equation}
 \mathbb{E}[Y_\gamma^{-n} ] = \Gamma(1 + \frac{n \gamma^2}{4}  ) \Gamma( 1 - \frac{\gamma^2}{4} )^n, \quad \forall n \in \mathbb{N}.
\end{equation}
The series
$$ \lambda \mapsto \sum_{n=0}^{\infty} \frac{\lambda^n}{n !} \Gamma(1 + \frac{n \gamma^2}{4}  ) \Gamma( 1 - \frac{\gamma^2}{4} )^n $$
has an infinite radius of convergence, meaning that the moments of $\frac{1}{Y_\gamma}$ entirely determine its law and one can even give an explicit probability density for $\frac{1}{Y_{\gamma}}$,
\begin{equation}
f_{\frac{1}{Y_\gamma}}(y) =  \frac{4}{\beta \gamma^2} ( \frac{y}{\beta})^{ \frac{4}{\gamma^2}-1} e^{ -( \frac{y}{\beta})^{\frac{4}{\gamma^2} }} \mathbf{1}_{[0, \infty[ }(y),  \quad 
\end{equation}
where $\beta = \Gamma( 1 - \frac{\gamma^2}{4})$. It can easily be turned into a probability density for $Y_\gamma$,
\begin{equation}
f_{Y_\gamma}(y) =  \frac{4 \beta }{\gamma^2} ( \beta y )^{ - \frac{4}{\gamma^2}-1} e^{ -( \beta y)^{-\frac{4}{\gamma^2} }} \mathbf{1}_{[0, \infty[ }(y),
\end{equation}
which proves Theorem \ref{FyBo}. Also notice that our proof does not use the value of the Morris integral \eqref{keyobs} and in fact we give a new proof of its value by taking integer moments in our GMC measure. 

Lastly, we point out that we have actually done more than just compute $U(\gamma,p)$, we have also completely determined the function $G(\gamma,p,t)$. By choosing $t=1$ we obtain the following corollary:

\begin{corollary}\label{corol}
Let $\gamma \in (0,2)$. For all real $p$ such that $p < \frac{4}{\gamma^2}$ the following identity holds: 
\begin{equation} 
\mathbb{E}\left[\left(\frac{1}{2 \pi} \int_0^{2 \pi} \vert 1 - e^{i \theta}\vert^{\frac{\gamma^2}{2}} e^{\frac{\gamma}{2}X(e^{i \theta})} d\theta  \right)^p \right] =  \frac{\Gamma(1 - p \frac{\gamma^2}{4}) \Gamma( 1 + \frac{\gamma^2}{2}) \Gamma(1 + (1-p)\frac{\gamma^2}{4} )  }{\Gamma(1 - \frac{\gamma^2}{4})^p \Gamma( 1 + \frac{\gamma^2}{4}) \Gamma(1 + (2-p)\frac{\gamma^2}{4} )   }.
\end{equation}
Equivalently we also have
\begin{equation}
\frac{1}{2 \pi} \int_0^{2 \pi} \vert 1 - e^{i \theta}\vert^{\frac{\gamma^2}{2}} e^{\frac{\gamma}{2}X(e^{i \theta})} d\theta \overset{law}{=} Y_{\gamma} X_1^{- \frac{\gamma^2}{4}}
\end{equation}
with $Y_{\gamma}$, $X_1$ independent and $X_1 \sim \mathcal{B}(1 + \frac{\gamma^2}{4},  \frac{\gamma^2}{4} )$, where $\mathcal{B}(\alpha,\beta)$ denotes the standard beta law.
\end{corollary}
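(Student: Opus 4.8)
The plan is to read $G(\gamma,p,1)$ off the second representation in Proposition \ref{CB} and then translate the answer into a statement about $Y_\gamma$ via Theorem \ref{FyBo}. The left-hand side of the corollary is exactly $(2\pi)^{-p}G(\gamma,p,1)$, so everything reduces to evaluating $G(\gamma,p,1)$. Setting $t=1$ in the expansion around $1-t^2=0$, the factor $(1-t^2)^{1+\gamma^2/2}$ multiplying $B_2$ vanishes (its exponent is strictly positive), while $F(\cdot,\cdot,\cdot,0)=1$, so that $G(\gamma,p,1)=B_1$. Thus the entire problem is to identify the connection coefficient $B_1$.

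To compute $B_1$ I would use that $C_2=0$ (established in the strategy), so that $G(\gamma,p,t)=C_1 F(A,B,C,t^2)$ with $C_1=G(\gamma,p,0)=U(\gamma,p)$ (at $t=0$ the weight $|0-e^{i\theta}|^{\gamma^2/2}$ is identically $1$) and $(A,B,C)$ as in Proposition \ref{EquaG}. Applying the hypergeometric connection formula \eqref{hpy1} to $F(A,B,C,\cdot)$ is legitimate precisely because $C-A-B=1+\tfrac{\gamma^2}{2}$ and $A+B-C+1=-\tfrac{\gamma^2}{2}$ match the parameters of the $1-t^2$ basis, and it yields
\[ B_1 = C_1\,\frac{\Gamma(C)\Gamma(C-A-B)}{\Gamma(C-A)\Gamma(C-B)} = U(\gamma,p)\,\frac{\Gamma(1+(1-p)\tfrac{\gamma^2}{4})\,\Gamma(1+\tfrac{\gamma^2}{2})}{\Gamma(1+\tfrac{\gamma^2}{4})\,\Gamma(1+(2-p)\tfrac{\gamma^2}{4})}. \]
Dividing by $(2\pi)^p$ and inserting $U(\gamma,p)=(2\pi)^p\,\mathbb{E}[Y_\gamma^p]=(2\pi)^p\,\Gamma(1-p\tfrac{\gamma^2}{4})/\Gamma(1-\tfrac{\gamma^2}{4})^p$ from Theorem \ref{FyBo} produces the claimed ratio of Gamma functions. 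Since Propositions \ref{EquaG} and \ref{CB} are proved for $p\le 0$, the last step for the first assertion is to extend the identity to all $p<\tfrac{4}{\gamma^2}$: both sides are analytic in $p$ on the domain where the moment of the weighted measure is finite, and they agree on the open set $\{p<0\}$, so the identity theorem concludes.

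For the distributional identity I would match moments. Writing $W$ for the normalized weighted mass on the left, the first part gives $\mathbb{E}[W^p]$ for all $p<\tfrac{4}{\gamma^2}$. By independence, $\mathbb{E}[(Y_\gamma X_1^{-\gamma^2/4})^p]=\mathbb{E}[Y_\gamma^p]\,\mathbb{E}[X_1^{-p\gamma^2/4}]$; using Theorem \ref{FyBo} for the first factor and the Beta moment formula $\mathbb{E}[X_1^s]=\Gamma(a+s)\Gamma(a+b)/(\Gamma(a)\Gamma(a+b+s))$ with $a=1+\tfrac{\gamma^2}{4}$, $b=\tfrac{\gamma^2}{4}$, $s=-p\tfrac{\gamma^2}{4}$ for the second, one obtains the identical ratio of Gamma functions. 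Hence $W$ and $Y_\gamma X_1^{-\gamma^2/4}$ share all moments of order $p<\tfrac{4}{\gamma^2}$, in particular all negative integer moments. As in the main proof, these negative moments grow slowly enough (the dominant factor being $\Gamma(1+n\tfrac{\gamma^2}{4})$, so the associated exponential generating series has infinite radius of convergence) that they determine the law of $1/W$, hence of $W$, and the two variables coincide in law.

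The step I expect to require the most care is the analytic continuation in $p$: I must confirm the precise range for which $\mathbb{E}[W^p]$ is finite and holomorphic. The heuristic that the boundary insertion $|1-e^{i\theta}|^{\gamma^2/2}$ only depresses the mass near $e^{i\theta}=1$ strongly suggests the positive-moment threshold is unchanged at $\tfrac{4}{\gamma^2}$, but making this rigorous (via a Seiberg-type bound on the weighted GMC moment, or by reusing the moment estimates already needed to make sense of $G(\gamma,p,t)$) is the one genuinely analytic point; the remainder is bookkeeping with Gamma functions and the connection coefficients already supplied by Proposition \ref{CB}.
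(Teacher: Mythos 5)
Your proposal is correct and follows essentially the same route as the paper: with $C_2=0$ and $C_1=U(\gamma,p)$ already identified, the paper also reads off the corollary by setting $t=1$ so that $G(\gamma,p,1)=B_1$, expresses $B_1$ in terms of $C_1$ via the connection formula \eqref{hpy1} (equivalently \eqref{hpy2} evaluated at $x=t^2$), inserts the value of $U(\gamma,p)$ from Theorem \ref{FyBo}, and handles the extension in $p$ and the Beta-law restatement by the same moment-determinacy argument used for Theorem \ref{FyBo}. The one point you flagged as genuinely delicate --- finiteness (hence analyticity) of the positive moments of the weighted mass up to $p<\frac{4}{\gamma^2}$ --- is in fact immediate: since $\vert 1-e^{i\theta}\vert^{\frac{\gamma^2}{2}}\leq 2^{\frac{\gamma^2}{2}}$, the weighted mass is bounded above by $2^{\frac{\gamma^2}{2}}Y_\gamma$ pointwise, so \eqref{existence_moments} gives the required finiteness without any new Seiberg-type estimate.
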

A similar formula is also expected to hold for the so-called dual degenerate insertion,
\begin{conjecture}\label{conj} Let $\gamma \in (0,2)$. For all real $p$ such that $p < \frac{4}{\gamma^2}$ we expect to have 
\begin{equation}
\mathbb{E}\left[\left(\frac{1}{2 \pi} \int_0^{2 \pi} \vert 1 - e^{i \theta}\vert^{2} e^{\frac{\gamma}{2}X(e^{i \theta})} d\theta  \right)^p\right] =  \frac{\Gamma(1 - p \frac{\gamma^2}{4}) \Gamma( 1 + \frac{8}{\gamma^2}) \Gamma(1 + \frac{4}{\gamma^2} -p )  }{\Gamma(1 - \frac{\gamma^2}{4})^p \Gamma( 1 + \frac{4}{\gamma^2}) \Gamma(1 + \frac{8}{\gamma^2} -p )   }
\end{equation}
and we can write again
\begin{equation}
\frac{1}{2 \pi} \int_0^{2 \pi} \vert 1 - e^{i \theta}\vert^{2} e^{\frac{\gamma}{2}X(e^{i \theta})} d\theta \overset{law}{=} Y_{\gamma} X_2^{- 1}
\end{equation}
with $Y_{\gamma}$, $X_2$ independent and $X_2 \sim \mathcal{B}(1 + \frac{4}{\gamma^2}, \frac{4}{\gamma^2} )$.
\end{conjecture}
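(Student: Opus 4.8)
The plan is to mirror, step by step, the proof of Corollary \ref{corol}, replacing the degenerate field insertion by its dual. Concretely, I would introduce the dual observable
\[
\widetilde{G}(\gamma,p,t) = \mathbb{E}\Big[\Big(\int_0^{2\pi} |t - e^{i\theta}|^{2}\, e^{\frac{\gamma}{2}X(e^{i\theta})}\,d\theta\Big)^p\Big], \qquad t \in [0,1],
\]
which in the language of section \ref{CFT} is the two-point correlation function attached to $U(\gamma,p)$ carrying a bulk \emph{dual} degenerate insertion at $t$. The exponent $2$ here is exactly the one produced by the Girsanov shift of the boundary GMC of charge $\gamma/2$ against a bulk insertion of charge $-2/\gamma$, just as the exponent $\gamma^2/2$ in \eqref{defGp} comes from a charge $-\gamma/2$. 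Since $|t-e^{i\theta}|^{2} = 1 - 2t\cos\theta + t^2$ is a polynomial in $t$, and since $\widetilde{G}(\gamma,p,-t) = \widetilde{G}(\gamma,p,t)$ by rotational invariance of the GMC measure, the function $\widetilde{G}(\gamma,p,\cdot)$ is even and real-analytic on $[0,1)$; this regularity plays the same role that smoothness of $G$ played in forcing $C_2 = 0$ in the proof of Corollary \ref{corol}.

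The crux is to establish the \emph{dual} BPZ equation: for $p<0$, the map $t \mapsto \widetilde{G}(\gamma,p,t)$ should satisfy the analogue of Proposition \ref{EquaG} with the degenerate parameter $\gamma/2$ replaced by $2/\gamma$, so that after the substitution $x = t^2$ one obtains a hypergeometric equation with
\[
A = -p, \qquad B = -\frac{4}{\gamma^2}, \qquad C = \frac{4}{\gamma^2} - p + 1.
\]
These follow the same rule as in Proposition \ref{EquaG}: $A$ is minus $p$ times the product of the degenerate charge $2/\gamma$ with the GMC charge $\gamma/2$, $B$ is minus the square of the degenerate charge, and $C = 1 - B + A$. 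Granting this equation, the analogue of Proposition \ref{CB} holds. The second, singular solution at $t=0$ behaves like $t^{2(p-4/\gamma^2)}$, hence blows up there and is excluded by the finiteness and analyticity of $\widetilde{G}$ at the origin, so $\widetilde{G}(\gamma,p,t) = \widetilde{C}_1\, F(A,B,C;t^2)$ with $\widetilde{C}_1 = \widetilde{G}(\gamma,p,0) = U(\gamma,p)$. Because $C - A - B = 1 + 8/\gamma^2 > 0$, the series converges at $1$, and Gauss' evaluation yields
\[
\widetilde{G}(\gamma,p,1) = U(\gamma,p)\,\frac{\Gamma(C)\Gamma(C-A-B)}{\Gamma(C-A)\Gamma(C-B)} = U(\gamma,p)\,\frac{\Gamma(1+\frac{4}{\gamma^2}-p)\,\Gamma(1+\frac{8}{\gamma^2})}{\Gamma(1+\frac{4}{\gamma^2})\,\Gamma(1+\frac{8}{\gamma^2}-p)}.
\]
Dividing by $(2\pi)^p$ and inserting the value of $U(\gamma,p)$ from Theorem \ref{FyBo} produces exactly the claimed formula for $p<0$, which extends to all real $p<4/\gamma^2$ since both sides are analytic in $p$ on that range; the beta-law representation $Y_\gamma X_2^{-1}$, $X_2 \sim \mathcal{B}(1+\frac{4}{\gamma^2},\frac{4}{\gamma^2})$, then follows from the moment computation $\mathbb{E}[X_2^{-p}]$ exactly as in Corollary \ref{corol}. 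Note that, contrary to the main proof, no new shift equation for $U$ is needed: once $U(\gamma,p)$ is known for all real $p<4/\gamma^2$, the dual BPZ equation alone pins down $\widetilde{G}(\gamma,p,1)$.

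The hard part, and the reason this remains a conjecture, is the rigorous derivation of the dual BPZ equation. The proof of Proposition \ref{EquaG} rests on Gaussian integration by parts, which closes into a second-order ODE precisely because the weight exponent $\gamma^2/2$ and the GMC charge $\gamma/2$ are commensurate: the two $t$-derivatives reproduce, after integration by parts in the field, reweightings that stay inside the same GMC. For the dual insertion the charge $-2/\gamma$ is generically incommensurate with $\gamma/2$, the relevant level-two null-vector relation carries the dual coefficient $4/\gamma^2$, and the naive integration by parts no longer closes; moreover the intermediate quantities one would differentiate correspond to insertions of charge $2/\gamma$ lying outside the $L^p$/Girsanov regime in which the computation of Proposition \ref{EquaG} is licit. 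This is the same obstruction met for the dual shift equation in the proof of the DOZZ formula \cite{DOZZ1,DOZZ2}. I expect it can be resolved either by analytically continuing in $\gamma$ and reading $4/\gamma^2$ as $(\gamma')^2/4$ for a dual GMC, or by importing the dual null-vector identity after a suitable regularization; establishing it is the sole missing ingredient, the remainder being a transcription of the proof of Corollary \ref{corol}.
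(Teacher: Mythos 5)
You were asked to prove a statement that the paper itself does not prove: it is stated as Conjecture \ref{conj}, precisely because the method behind Corollary \ref{corol} does not extend to the dual insertion. So there is no proof in the paper to compare against, and your proposal is not a proof either --- but it is the correct \emph{conditional} derivation, and it isolates exactly the right missing ingredient. Granting your dual hypergeometric equation with $A=-p$, $B=-\tfrac{4}{\gamma^2}$, $C=1+\tfrac{4}{\gamma^2}-p$, the rest checks out: the singular solution $t^{2(p-4/\gamma^2)}$ must be discarded at $t=0$ for $p<0$, so $\widetilde{G}(\gamma,p,t)=U(\gamma,p)F(A,B,C;t^2)$; Gauss's evaluation with $C-A-B=1+\tfrac{8}{\gamma^2}$, $C-A=1+\tfrac{4}{\gamma^2}$, $C-B=1+\tfrac{8}{\gamma^2}-p$, combined with Theorem \ref{FyBo}, reproduces exactly the conjectured ratio of Gamma factors; and the representation $Y_\gamma X_2^{-1}$ is the standard beta moment identity $\mathbb{E}[X_2^{-p}]=\Gamma(1+\tfrac{4}{\gamma^2}-p)\Gamma(1+\tfrac{8}{\gamma^2})/\bigl(\Gamma(1+\tfrac{4}{\gamma^2})\Gamma(1+\tfrac{8}{\gamma^2}-p)\bigr)$. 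Your observation that no second shift equation is needed --- a single evaluation at $t=1$ suffices once $U(\gamma,p)$ is known --- is also correct and mirrors how Corollary \ref{corol} is extracted in the paper.

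The genuine gap is the one you name, the dual BPZ equation, and it is worth recording concretely why it resists the paper's technique rather than being merely ``more technical''. Write $\kappa$ for the coefficient of $\tfrac{1}{z-s}$ produced by differentiating the Girsanov interaction between the degenerate insertion of charge $-\chi$ and a boundary interaction point: $\kappa=\tfrac{\gamma\chi}{2}$, so $\kappa=\tfrac{\gamma^2}{4}$ in Theorem \ref{BPZequa} and $\kappa=1$ for the dual charge $\chi=\tfrac{2}{\gamma}$. In $\partial_{zz}$ the terms in $\int_{\mathbb{R}}\tfrac{1}{(z-s)^2}\langle s,z,z_1\rangle ds$ carry total coefficient $\mu_\partial\kappa(1-\kappa)$ and the double integral $\int\int\tfrac{1}{z-s}\tfrac{1}{z-t}\langle s,t,z,z_1\rangle ds\,dt$ carries coefficient $\mu_\partial^2\kappa^2$, while the integration by parts in $s$ (the $\tfrac{1}{(s-t)_{\epsilon,\epsilon}}$ step in the proof of Theorem \ref{BPZequa}) converts one unit of the former into $+\kappa$ units of $\tfrac{1}{(z-s)^2}$ plus $-\mu_\partial\tfrac{\gamma^2}{4}$ units of the double integral. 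Eliminating the $\tfrac{1}{(z-s)^2}$ terms therefore leaves the double integral with coefficient proportional to $\mu_\partial^2\,\kappa(\kappa-\tfrac{\gamma^2}{4})$, which vanishes if and only if $\chi=\tfrac{\gamma}{2}$. For the dual insertion ($\kappa=1$) an uncancelled term $\propto\mu_\partial^2(1-\tfrac{\gamma^2}{4})\int\int\tfrac{1}{z-s}\tfrac{1}{z-t}\langle s,t,z,z_1\rangle ds\,dt$ survives, so the equation does not close; absorbing it is precisely the role a dual boundary interaction $e^{\frac{2}{\gamma}X}$ would play, and that object is a supercritical GMC ($\tfrac{4}{\gamma}>2$ for $\gamma<2$), hence undefined --- the same obstruction met by the dual equations in \cite{DOZZ1,DOZZ2}. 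So your reduction is sound and your diagnosis accurate, but the statement remains, as in the paper, a conjecture.
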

In fact we expect that using similar techniques it will be possible to obtain many more exact formulas on GMC measures. A analogue result has very recently been obtained for the total mass of the GMC measure on the unit interval $[0,1]$ with arbitrary insertions in $0$ and $1$, see \cite{RZhu}. One could also study more general cases on the unit circle, such as Conjecture \ref{conj} which is a special case of the more general conjectures of \cite{Ostro2}. Therefore our methods combined with the proof of the DOZZ formula \cite{DOZZ1, DOZZ2} open up brand new perspectives for studying the integrability of GMC measures. Lastly we point out that very recently an alternative proof to the Fyodorov-Bouchaud formula was obtained in \cite{ChNaj} by showing an equality between GMC on the unit circle and the limit of the circular beta ensemble. This alternative approach relies on the theory of orthogonal polynomials and is also able to describe the Fourier coefficients of the GMC measure on the circle.

\section{Boundary Liouville Conformal Field Theory}\label{CFT}

\subsection{The Liouville correlation functions on $\mathbb{H}$}
In order to prove Proposition \ref{EquaG} we introduce the framework of LCFT on a domain with boundary, following the setting of \cite{Disk}. Here we will work on the upper half plane $\mathbb{H}$ (with boundary $\partial \mathbb{H} = \mathbb{R}$) which simplifies computations since the terms obtained will behave nicely under conjugation $z \mapsto \overline{z}$. We can then transpose everything easily to the unit disk $\mathbb{D}$ by the KPZ relation \eqref{KPZ}. The starting point is the well known Liouville action where in our case we must add a boundary term,
\begin{equation}\label{action}
S_L(X,\hat{g}) = \frac{1}{4\pi} \int_{\mathbb{H}} ( |\partial^{\hat{g}} X|^2 + Q R_{\hat{g}} X ) \hat{g}(z) d^2z  + \frac{1}{2\pi} \int_{  \mathbb{R}} (  Q K_{\hat{g}} X + 2 \pi \mu_{\partial} \; e^{\frac{\gamma}{2} X}) \hat{g}(s)^{1/2} ds,\footnote{The action usually also contains a bulk interaction term $\mu e^{\gamma X} $ but for our purposes we set $\mu =0$. Hence we are working with a degenerate form of boundary LCFT.} 
\end{equation}
where $\partial^{\hat{g}}$, $R_{\hat{g}}$, and $K_{\hat{g}}$  respectively stand for the gradient, Ricci scalar curvature and geodesic curvature of the boundary in the metric $ \hat{g}$ (which can be chosen arbitrarily). We also have $\gamma \in (0,2)$, $ Q = \frac{\gamma}{2} + \frac{2}{\gamma} $, and $ \mu_{\partial} > 0$. In the following we choose the background metric $\hat{g}(z) = \frac{4}{\vert z + i \vert^4}$ on $\mathbb{H}$. This is a natural choice as if we map $\mathbb{H}$ to $\mathbb{D}$ (using $z \mapsto \frac{z-i}{z+i}$) this choice corresponds to the Euclidean (or flat) metric on $\mathbb{D}$. With our choice of $\hat{g}$ we get $R_{\hat{g}} = 0 $ and $ K_{\hat{g}} = 1$ which simplifies the expression of \eqref{action}.

With this action we can formally define the correlation functions of LCFT. They are the quantities of interest of the theory that we hope to be able to compute with the techniques of CFT. We will consider two types of insertion points in our correlations: bulk insertions $(z_i, \alpha_i)$ (with $z_i \in \mathbb{H}$ and $\alpha_i \in \mathbb{R}$) and boundary insertions $(s_j, \beta_j)$ (with $s_j \in \mathbb{R}$ and  $\beta_j \in \mathbb{R}$). We introduce the following notations for the so-called vertex operators:
\begin{align*}
& V_{\alpha_i}(z_i) =  e^{ \alpha_i ( X(z_i) + \frac{Q}{2} \ln \hat{g}(z_i) ) } \\
& V_{\beta_j}(s_j) =  e^{ \frac{ \beta_j}{2} ( X(s_j) + \frac{Q}{2} \ln \hat{g}(s_j)  ) }.
\end{align*}
We formally define the correlations by,
\begin{equation}\label{def_formal}
\langle \prod_{i=1}^N V_{\alpha_i}(z_i) \prod_{j=1}^M V_{\beta_j}(s_j) \rangle_{\mathbb{H}}  =  \int_{\Sigma} D_{\hat{g}}X \prod_{i=1}^N e^{ \alpha_i ( X(z_i) + \frac{Q}{2} \ln \hat{g}(z_i) ) }  \prod_{j=1}^M e^{ \frac{\beta_j}{2} ( X(s_j) + \frac{Q}{2} \ln \hat{g}(s_j)  ) }  e^{-S_L(X,\hat{g})},
\end{equation}
for $N,M$ in $\mathbb{N}$. The philosophy of this heuristic definition is the following. Starting from a formal uniform measure $D_{\hat{g}}X$ on the space of maps $\Sigma = \lbrace X : \mathbb{H} \mapsto \mathbb{R} \rbrace $, we add a density given by $e^{-S_L(X,\hat{g})}$. This is simply the Boltzmann weight framework of statistical physics where the probability of a given state (here a map X) is proportional to exponential minus its energy (here the Liouville action). Following \cite{Sphere, Disk} it turns out that it is possible to give a rigorous probabilistic definition to \eqref{def_formal} in terms of GMC measures. The idea is to interpret the quantity $e^{-\frac{1}{4 \pi} \int_{\mathbb{H}}  |\partial^{\hat{g}} X|^2  \hat{g}(z) dz^2   } D_{\hat{g}}X$ as the formal density of a GFF that will be denoted $X$. As explained in \cite{ Disk}, the correct choice of boundary conditions for $X$ that lead to a conformally invariant functional are the Neumann boundary conditions. Furthermore, since $X$ is a priori only defined  up to a global constant we fix this constant by requiring the average of $X$ with respect to $\hat{g}$ over the boundary of $\mathbb{H}$ to be $0$. Thus one obtains the centered Gaussian field $X$ on $\mathbb{H}$ with covariance given for $x, y \in \mathbb{H}$ by,\footnote{The covariance of the GFF $X$ on $\mathbb{H}$ differs from the covariance \eqref{Cov} of $X$ on $\mathbb{D}$. The GFF on $\mathbb{D}$ is the image of the GFF on $\mathbb{H}$ by the conformal map $z \mapsto \frac{z-i}{z+i}$ linking $\mathbb{H}$ and $\mathbb{D}$.}
\begin{equation}\label{cov_h}
\mathbb{E} [ X(x) X(y) ] = \ln \frac{1}{\vert x - y \vert \vert x - \overline{y} \vert } + \ln \vert x + i \vert^2 +  \ln \vert y + i \vert^2 - 2 \ln 2,
\end{equation}
and with our choice of normalization for $X$ (using $ K_ { \hat{g} } =1$):
\begin{equation}\label{av}
 \int_{\mathbb{R}} X(s) \hat{g}(s)^{1/2} ds = \int_{\mathbb{R}} K_ { \hat{g} }(s) X(s) \hat{g}(s)^{1/2} ds =0.
\end{equation}
Since $X$ lives in the space of distributions we will need again to introduce a cut-off or regularization procedure. For $\delta >0$ let:
$$ \mathbb{H}_{\delta} = \lbrace z \in \mathbb{H} \vert \Im(z) > \delta \rbrace.  $$
Then let $\rho : [0, + \infty)  \mapsto [0, + \infty)$ be a $\mathcal{C}^{ \infty}$ function with compact support in $[0,1]$ and such that $ \pi \int_0^{ \infty } \rho(t) dt =1  $. For $ x \in \mathbb{H}$ we write  $ \rho_{ \epsilon }(x) = \frac{1}{\epsilon^2} \rho( \frac{x \overline{x}}{\epsilon^2} ) $. Then for $z \in \mathbb{H}_{\delta}$ and $\epsilon  < \delta$, we define $X_{\epsilon}$ by
\begin{equation}\label{reg1}
X_{\epsilon}(z) = ( \rho_{ \epsilon } \ast X)(z) = \int_{\mathbb{H}} d^2 x X(x) \rho_{ \epsilon }(z-x),
\end{equation}
and for $s \in \mathbb{R} $ by
\begin{equation}\label{reg2}
 X_{ \epsilon }(s) = 2( \rho_{\epsilon} \ast X)(s) =2 \int_{\mathbb{H}} d^2 x X(x) \rho_{\epsilon}( s-x).
\end{equation}
The idea of our regularization is that for a point $z \in \mathbb{H}_{\delta}$ at a distance at least $\delta$ from the boundary and for $\epsilon < \delta$ we can smooth our field $X(z)$ with $\rho$ on a ball of radius $\epsilon$ around $z$. For a point $s \in \mathbb{R}$ we will always write our convolution on the half ball contained in $\mathbb{H}$ of size $\epsilon$. Following the results of \cite{Disk}, we now define the correlation functions by the following limit,
\begin{equation}\label{limit23}
\langle \prod_{i=1}^N V_{\alpha_i}(z_i) \prod_{j=1}^M V_{\beta_j}(s_j) \rangle_{\mathbb{H}}   = \lim_{\epsilon \rightarrow 0} \: \langle \prod_{i=1}^N V_{\alpha_i}(z_i)  \prod_{j=1}^M V_{\beta_j}(s_j) \rangle_{\mathbb{H}, \epsilon},
\end{equation}
where
\begin{align}\label{reg_partition}
 \langle \prod_{i=1}^N V_{\alpha_i}(z_i) \prod_{j=1}^M V_{\beta_j}(s_j) \rangle_{\mathbb{H}, \epsilon}  = & \int_{\mathbb{R}}  dc e^{-Qc}  \mathbb{E} \bigg[\prod_{i=1}^N \epsilon^{\frac{\alpha_i^2}{2}} e^{ \alpha_i (X_{\epsilon}(z_i) + \frac{Q}{2} \ln \hat{g}(z_i) +c)} \prod_{j=1}^M \epsilon^{\frac{\beta_j^2}{4}}  e^{ \frac{\beta_j}{2}   (X_{\epsilon}(s_j) + \frac{Q}{2} \ln \hat{g}(s_j) +c)} \nonumber  \\ 
& \times \exp\left(-\mu_{\partial} \int_{  \mathbb{R}} \epsilon^{\frac{\gamma^2}{4}} e^{\frac{\gamma}{2} (X_{\epsilon}(s) + \frac{Q}{2} \ln \hat{g}(s) +c  )}  ds \right) \bigg]  .
\end{align}
At first glance the above definition may appear to be convoluted but it is simply the consequence of replacing the $e^{-\frac{1}{4 \pi} \int_{\mathbb{H}}  |\partial^{\hat{g}} X|^2  \hat{g}(z) dz^2   } D_{\hat{g}}X$ in \eqref{def_formal} by the expectation of the GFF, i.e. $X$ now becomes $X +c$, where $X$ is our GFF with covariance \eqref{cov_h} and $c$ is a constant integrated with respect to the Lebesgue measure on $\mathbb{R}$. As performed in \cite{Disk}, notice that for every exponential of $X_{\epsilon}$ we accompany it with a suitable power of $\epsilon$ to compensate the divergence of $\mathbb{E}[X_{\epsilon}^2]$ in order to have a well-defined limit as $\epsilon \rightarrow 0$. The $c$ is called the zero mode in physics, it corresponds to the fact that $ |\partial^{\hat{g}} X|^2$ only determines the field up to a constant. It is required to construct a conformally invariant functional that does not depend on the choice we have made for $X$ through \eqref{av}, see \cite{Sphere} for more details. To obtain \eqref{reg_partition} we have also used \eqref{av} and the explicit values of $R_{\hat{g}}$ and $K_{\hat{g}}$. Following the results of \cite{Disk}, the limit \eqref{reg_partition}  exists and is non zero if and only if the insertions obey the Seiberg bounds which are:\footnote{Note that in our case we are only working with the boundary GMC measure so the extra condition $\alpha_i <Q$ present in \cite{Disk} does not appear here since the GMC does not have to integrate the insertions in the bulk.}
\begin{equation}\label{Seiberg}
\sum_{i=1}^N \alpha_i + \frac{1}{2} \sum_{j=1}^M \beta_j > Q \text{  } \text{  and  } \text{  } \forall j, \quad \beta_j < Q.
\end{equation}

\subsection{The BPZ differential equation}

Our goal is to prove the following result:

\begin{theorem}\label{BPZequa}
Let $\gamma \in (0,2)$ and $\alpha  > Q + \frac{\gamma}{2}$. Then $ (z_1, z) \mapsto \langle V_{-\frac{\gamma}{2}}(z)  V_{\alpha}(z_1)   \rangle_{\mathbb{H}} $ is $\mathcal{C}^2$ on the set $\lbrace z_1,z \in \mathbb{H} \vert z_1 \neq z \rbrace$ and is a solution of the following PDE
$$ \left( \frac{4}{\gamma^2} \partial_{zz} +  \frac{\Delta_{-\frac{\gamma}{2}}}{(z - \overline{z})^2} +   \frac{\Delta_{\alpha}}{(z - z_1)^2} +  \frac{\Delta_{\alpha}}{(z - {\overline{z_1})^2}}  +  \frac{1}{z - \overline{z}} \partial_{\overline{z}} +  \frac{1}{z - z_1} \partial_{z_1} + \frac{1}{z - \overline{z_1}} \partial_{\overline{z_1}}  \right) \langle V_{-\frac{\gamma}{2}}(z)  V_{\alpha}(z_1)   \rangle_{\mathbb{H}} = 0$$
where $Q = \frac{\gamma}{2} + \frac{2}{\gamma} $,  $\Delta_{\alpha} = \frac{\alpha}{2}( Q - \frac{\alpha}{2})$ and $\Delta_{-\frac{\gamma}{2}} = -\frac{\gamma}{4}( Q + \frac{\gamma}{4})$.
\end{theorem}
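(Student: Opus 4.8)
The plan is to establish the PDE first for the regularized correlation $\langle z, z_1\rangle_{\epsilon}$ of \eqref{reg_partition}, computing every derivative occurring in the equation by Gaussian integration by parts on the underlying free field $X$, and then to pass to the limit $\epsilon\to 0$. I would begin with the regularity statement. To justify differentiating under the expectation and the zero-mode integral $\int dc$, the point is that the hypothesis $\alpha > Q + \frac{\gamma}{2}$ ensures the first Seiberg bound in \eqref{Seiberg} holds for $\langle V_{-\frac{\gamma}{2}}(z)V_{\alpha}(z_1)\rangle_{\mathbb{H}}$ with enough margin that the relevant GMC quantities and their $z$- and $z_1$-derivatives are finite and locally uniformly integrable away from the diagonal $z=z_1$; this yields $\mathcal{C}^2$ regularity on $\{z_1,z\in\mathbb{H}\,|\,z_1\neq z\}$ and legitimizes moving $\partial_z$, $\partial_{z_1}$, $\partial_{\bar z}$, $\partial_{\overline{z_1}}$ and $\partial_{zz}$ inside.

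The first-order derivatives $\partial_{z_1}$, $\partial_{\bar z}$ and $\partial_{\overline{z_1}}$ are then obtained exactly as in the sample computation of $\partial_z\langle z,z_1\rangle$ carried out just before the theorem: one applies the Gaussian integration-by-parts formula with $f(x) = \partial_\bullet\rho_{\epsilon}(\cdot - x)$ for the appropriate derivative $\partial_\bullet$, evaluates the resulting covariance contractions using \eqref{cov_h} and \eqref{reg1}--\eqref{reg2}, and uses the $z$-independence of the symmetric convolution $\int_{\mathbb{H}}\int_{\mathbb{H}}\ln\frac{1}{|x-y|}\rho_{\epsilon}\rho_{\epsilon}$ to discard the divergent diagonal self-energy. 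In the limit each derivative becomes a finite sum of simple poles $\frac{1}{z-z_1}$, $\frac{1}{z-\overline{z_1}}$, $\frac{1}{z-\bar z}$ (and their barred analogues) multiplying $\langle z,z_1\rangle$, plus one boundary integral of the form $\int_{\mathbb{R}}\frac{1}{(\cdot)-s}\langle s,z,z_1\rangle\,ds$, the metric-dependent $\partial\ln\hat g$ terms being removed by Lemma \ref{lem1} exactly as in the $\partial_z$ case.

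The heart of the argument is the second derivative $\partial_{zz}\langle z, z_1\rangle$, obtained by differentiating the regularized correlation a second time and performing a \emph{second} Gaussian integration by parts. This produces two new self-contraction quantities, $\mathbb{E}[(\partial_z X_{\epsilon}(z))^2]$ and $\mathbb{E}[\partial_{zz}X_{\epsilon}(z) X_{\epsilon}(z)]$, each carrying a divergence of order $\epsilon^{-2}$ from the short-distance $\ln\frac{1}{|x-y|}$ singularity of \eqref{cov_h}. The key observation is that $z \mapsto \int_{\mathbb{H}}\int_{\mathbb{H}}\ln\frac{1}{|x-y|}\rho_{\epsilon}(z-x)\rho_{\epsilon}(z-y)\,d^2x\,d^2y$ is independent of $z$; its vanishing second derivative, combined with the $x\leftrightarrow y$ symmetry, forces these two divergences to be equal and opposite, so they cancel in the combination $\frac{\gamma^2}{4}\left(\mathbb{E}[(\partial_z X_{\epsilon})^2] + \mathbb{E}[\partial_{zz}X_{\epsilon}(z) X_{\epsilon}(z)]\right)$ generated by the double insertion of $-\frac{\gamma}{2}\partial_z X_{\epsilon}(z)$. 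What remains finite is the image-charge contribution of the $\frac{1}{|x-\bar y|}$ part of \eqref{cov_h}, which yields a $\frac{1}{(z-\bar z)^2}$ term; after multiplication by $\frac{4}{\gamma^2}$ and combination with the double poles produced by differentiating the rational prefactors and with the $\frac{1}{z-\bar z}\partial_{\bar z}$ contribution, this reconstructs precisely the coefficient $\Delta_{-\frac{\gamma}{2}} = -\frac{\gamma}{4}(Q + \frac{\gamma}{4})$, the boundary-curvature terms again being absorbed via Lemma \ref{lem1}.

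Finally I would substitute the derivative identities into the left-hand side of the PDE. The double poles $\frac{\Delta_{\alpha}}{(z-z_1)^2}$ and $\frac{\Delta_{\alpha}}{(z-\overline{z_1})^2}$ come from differentiating the prefactor $\frac{\gamma\alpha}{4}(\frac{1}{z-z_1}+\frac{1}{z-\overline{z_1}})$, while the remaining products of simple poles must recombine to cancel. The delicate point, and the step I expect to be the main obstacle, is that the boundary descendant integrals $\int_{\mathbb{R}}\frac{1}{z-s}\langle s,z,z_1\rangle\,ds$, $\int_{\mathbb{R}}\frac{1}{(z-s)^2}\langle s,z,z_1\rangle\,ds$ and $\int_{\mathbb{R}}\frac{1}{z-s}\partial_z\langle s,z,z_1\rangle\,ds$ generated by $\frac{4}{\gamma^2}\partial_{zz}$, together with the three-point integrals carried by the first-order terms $\frac{1}{z-z_1}\partial_{z_1}$, $\frac{1}{z-\overline{z_1}}\partial_{\overline{z_1}}$ and $\frac{1}{z-\bar z}\partial_{\bar z}$, must cancel identically. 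This cancellation is not automatic: one has to rewrite the various kernels by partial fractions and integrate by parts in the boundary variable $s$, the boundary terms at $s\to\pm\infty$ vanishing thanks to the integrability guaranteed by $\alpha > Q + \frac{\gamma}{2}$. It is exactly here that the degenerate value $-\frac{\gamma}{2}$ and the matching coefficient $\frac{4}{\gamma^2}$ are indispensable — this is the probabilistic incarnation of the BPZ null-vector relation — and controlling these $\epsilon$-regularized integrals uniformly so that the cancellation survives the limit $\epsilon\to 0$ is the most demanding part of the proof.
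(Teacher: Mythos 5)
Your proposal is correct in substance and follows the same overall strategy as the paper: regularize via \eqref{reg_partition}, compute every derivative by Gaussian integration by parts against the field $X$, remove the metric-dependent terms $\partial \ln \hat{g}$ with Lemma \ref{lem1}, and then verify the algebraic cancellations, including the cancellation of the boundary (descendant) integrals by an integration by parts in the variable $s$. The one genuine difference is your treatment of $\partial_{zz}$. You insert the field derivative twice, producing the self-contractions $\mathbb{E}[(\partial_z X_{\epsilon}(z))^2]$ and $\mathbb{E}[\partial_{zz} X_{\epsilon}(z) X_{\epsilon}(z)]$, and cancel their $\epsilon^{-2}$ divergences against each other using translation invariance of the mollified log kernel; this is correct, since their sum equals $\tfrac{1}{2}\partial_{zz}\mathbb{E}[X_{\epsilon}(z)^2]$ and the divergent part of $\mathbb{E}[X_{\epsilon}(z)^2]$ is $z$-independent, leaving only the image-charge contribution in $\tfrac{1}{(z-\overline{z})^2}$. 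The paper never meets these divergences: after the first integration by parts the insertion $\partial_z X_{\epsilon}(z)$ has been traded for explicit kernels, so the expression \eqref{double1} for $\partial_{zz}\langle z,z_1\rangle$ is obtained by differentiating the first-order identity once more, which only requires the (first-order) computation of $\partial_z \langle s,z,z_1\rangle$, at the price of tracking the four-point terms $\langle s,t,z,z_1\rangle$. Your route is self-contained but analytically more delicate; the paper's is cleaner because each integration by parts involves a single field-derivative insertion. One concrete device your plan needs and does not name: after integrating by parts in $s$, the derivative $\partial_s \langle s,z,z_1\rangle$ produces the non-integrable kernel $\tfrac{1}{s-t}$ against $\langle s,t,z,z_1\rangle$; the paper tames this by keeping the regularized kernel $\tfrac{1}{(s-t)_{\epsilon,\epsilon}}$ and antisymmetrizing in $(s,t)$, so that the combination $\tfrac{1}{z-s}-\tfrac{1}{z-t}$ contributes the factor $s-t$ that cancels the singularity before the limit $\epsilon \to 0$ is taken (see \eqref{double2}). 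With that point supplied, your outline matches the paper's proof step for step.
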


First let us note that the condition $\alpha  > Q + \frac{\gamma}{2}$ corresponds exactly to the Seiberg bounds \eqref{Seiberg} and that the $\Delta_{\alpha}, \Delta_{-\frac{\gamma}{2}}$ are the so-called conformal weights of CFT. Before proceeding with the proof of Theorem \ref{BPZequa}, we must derive several identities. First of all, we need to replace all the $\mathbb{E}[X(x) X(y)]$ by the exact log kernel $\ln \frac{1}{\vert x - y \vert \vert x - \overline{y} \vert} $ or in other words we need to eliminate the dependence on the background metric $\hat{g}$. This will be a consequence of the following identity:
\begin{lemma}\label{lem1} For insertions $(z_i, \alpha_i)$ and $(s_j, \beta_j)$ satisfying the Seiberg bounds \eqref{Seiberg} we have:
$$   \frac{\mu_{\partial} \gamma  }{2 } \int_{ \mathbb{R} } ds \langle V_{\gamma } (s) \prod_{i=1}^N V_{ \alpha_i } (z_i) \prod_{j=1}^M V_{\beta_j } (s_j)    \rangle_{\mathbb{H}}  = \left(\sum_{i=1}^N \alpha_i + \sum_{j=1}^M \frac{\beta_j}{2} - Q \right)\langle  \prod_{i=1}^N V_{ \alpha_i } (z_i) \prod_{j=1}^M V_{\beta_j } (s_j)    \rangle_{\mathbb{H}}. $$
\end{lemma}
\begin{proof}
We perform the change of variable $ \frac{2}{\gamma} \ln \mu_{\partial} + c = c' $ in the following expression:
\begin{align*}
  & \int_{\mathbb{R}}  dc e^{-Qc}  \mathbb{E} \bigg[\prod_{i=1}^N \epsilon^{\frac{\alpha_i^2}{2}} e^{ \alpha_i (X_{\epsilon}(z_i) + \frac{Q}{2} \ln \hat{g}(z_i) +c)} \prod_{j=1}^M \epsilon^{\frac{\beta_j^2}{4}}  e^{ \frac{\beta_j}{2}   (X_{\epsilon}(s_j) + \frac{Q}{2} \ln \hat{g}(s_j) +c)} \nonumber  \\ 
& \quad \quad \times \exp\left(-\mu_{\partial} \int_{  \mathbb{R}} \epsilon^{\frac{\gamma^2}{4}} e^{\frac{\gamma}{2} (X_{\epsilon}(s) + \frac{Q}{2} \ln \hat{g}(s) +c )}  ds \right) \bigg]\\
& = \mu_{ \partial }^{ - \frac{2 \sum_i \alpha_i + \sum_j \beta_j -2 Q}{\gamma}}  \int_{\mathbb{R}}  dc' e^{-Qc'}  \mathbb{E} \bigg[\prod_{i=1}^N \epsilon^{\frac{\alpha_i^2}{2}} e^{ \alpha_i (X_{\epsilon}(z_i) + \frac{Q}{2} \ln \hat{g}(z_i) +c')} \prod_{j=1}^M \epsilon^{\frac{\beta_j^2}{4}}  e^{ \frac{\beta_j}{2}   (X_{\epsilon}(s_j) + \frac{Q}{2} \ln \hat{g}(s_j) +c')} \nonumber  \\ 
& \quad \quad \times \exp\left(- \int_{  \mathbb{R}} \epsilon^{\frac{\gamma^2}{4}} e^{\frac{\gamma}{2} (X_{\epsilon}(s) + \frac{Q}{2} \ln \hat{g}(s) +c'  )}  ds \right) \bigg].
\end{align*}

The result is then obtained by differentiating with respect to $\mu_{\partial}$ and then taking $\epsilon$ to $0$.
\end{proof}
So far we have introduced correlation functions of Liouville theory with an arbitrary number of insertions points. For the purposes of Theorem \ref{BPZequa} we will only need to consider correlations with two bulk insertions $z,z_1 \in\mathbb{H}$ of weights $- \frac{\gamma}{2}$ and $ \alpha$ and eventually boundary insertions $s,t \in \mathbb{R}$ of weight $\gamma$. The value $- \frac{\gamma}{2}$ is called the degenerate weight in the language of CFT. It is for this specific value (and also for the dual weight $-\frac{2}{\gamma}$) that a correlation function containing $V_{-\frac{\gamma}{2}}(z)$ will obey a BPZ differential equation. In the forthcoming computations we will extensively use the shorthand notations:

\vspace{8pt}
\begin{tabular}{ll}
$\langle z,z_1 \rangle := \langle V_{\alpha}(z_1) V_{ -\frac{\gamma}{2}  }(z)  \rangle_{\mathbb{H}}$, & $\langle z,z_1 \rangle_{\epsilon} := \langle V_{\alpha}(z_1) V_{ -\frac{\gamma}{2}  }(z)  \rangle_{\mathbb{H}, {\epsilon}}$, \\ \vspace{2pt}
 $\langle s, z, z_1 \rangle := \langle V_{\gamma } (s)  V_{\alpha}(z_1) V_{ -\frac{\gamma}{2}  }(z)  \rangle_{\mathbb{H}}$,& $\langle s, z, z_1 \rangle_{\epsilon} := \langle V_{\gamma } (s)  V_{\alpha}(z_1) V_{ -\frac{\gamma}{2}  }(z)    \rangle_{\mathbb{H},\epsilon}$, \\ \vspace{2pt}
 $\langle s, t, z, z_1 \rangle := \langle V_{\gamma } (s) V_{\gamma } (t)   V_{ \alpha } (z_1) V_{ - \frac{\gamma}{2}} (z)   \rangle_{\mathbb{H}},$&$\langle s, t, z, z_1 \rangle_{\epsilon} := \langle V_{\gamma } (s) V_{\gamma } (t)   V_{ \alpha } (z_1) V_{ - \frac{\gamma}{2}} (z)   \rangle_{\mathbb{H}, \epsilon}.  $
\end{tabular}
\vspace{8pt}

We now derive a relation obtained by integration by parts on the underlying Gaussian measure of our field $X$. We introduce $X(f) = \int_{\mathbb{H}} X(x) f(x) d^2x$, the distributional pairing of $X$ and $f: \mathbb{H} \mapsto \mathbb{R}$ for some smooth $f$ with compact support, as well as the following quantity:
\begin{align}\label{Guassian_IPP}
\langle X(f)  V_{\alpha}(z_1) V_{ -\frac{\gamma}{2}  }(z)    \rangle_{\mathbb{H}, \epsilon} := & \int_{\mathbb{R}}  dc e^{-Qc}  \mathbb{E} \bigg[ X(f) \epsilon^{\frac{\alpha^2}{2}} e^{ \alpha (X_{\epsilon}(z_1) + \frac{Q}{2} \ln \hat{g}(z_1) +c)}  \epsilon^{\frac{\gamma^2}{8}}  e^{ -\frac{\gamma}{2}   (X_{\epsilon}(z) + \frac{Q}{2} \ln \hat{g}(z) +c)} \nonumber \nonumber \\ 
& \times \exp\left(-\mu_{\partial} \int_{  \mathbb{R}} \epsilon^{\frac{\gamma^2}{4}} e^{\frac{\gamma}{2} (X_{\epsilon}(s) + \frac{Q}{2} \ln \hat{g}(s) + c )}  ds \right) \bigg].
\end{align}
A convenient way to see the Gaussian integration by parts is to fix a small parameter $u>0$ and to write the following Girsanov theorem stated in \cite[Theorem 6.1]{ReviewDOZZ} for the Gaussian random variable $u X(f)$,
$$ \mathbb{E} \left[ e^{u X(f) - \frac{u^2}{2} \mathbb{E}[X(f)^2]} F( (X(x))_{x \in \mathbb{H}} ) \right] = \mathbb{E}\left[ F((X(x) + u \mathbb{E}[X(x) X(f)])_{x \in \mathbb{H}}) \right], $$
where $F$ is a continuous bounded functional of the GFF. Then by differentiating on both sides with respect to $u$ and taking $u =0$:
\begin{equation}\label{IPP_girsanov}
\mathbb{E} \left[ X(f)  F( (X(x))_{x \in \mathbb{H}} ) \right] = {\frac{d}{d u}}_{|u=0} \mathbb{E}\left[ F((X(x) + u \mathbb{E}[X(x) X(f)])_{x \in \mathbb{H}}) \right].
\end{equation}
We can then apply \eqref{IPP_girsanov} to \eqref{Guassian_IPP} by taking $F$ to be the functional of $X$ defining \eqref{Guassian_IPP}. The derivation with respect to $u$ will give three terms, one contribution from $e^{\alpha X_{\epsilon}(z_1)}$, one from $e^{ -\frac{\gamma}{2}   X_{\epsilon}(z) }$, and one from the GMC in the exponential. This last contribution gives a term in $ \langle s, z,z_1 \rangle_{\epsilon} $ as differentiating the exponential of \eqref{Guassian_IPP} containing the GMC has the effect of producing an insertion $s$ on the boundary of weight $\gamma$. Thus the relation:
\begin{align}\label{Gaussian_IPP}
&\langle X(f)  V_{\alpha}(z_1) V_{ -\frac{\gamma}{2}  }(z)    \rangle_{\mathbb{H}, \epsilon} \nonumber \\
 & =  \alpha \mathbb{E}[X(f) X_{\epsilon}(z_1)] \langle z, z_1 \rangle_{ \epsilon } - \frac{\gamma}{2} \mathbb{E}[X(f) X_{\epsilon}(z)] \langle z, z_1 \rangle_{ \epsilon } - \mu_{\partial} \frac{\gamma}{2} \int_{ \mathbb{R}} \mathbb{E}[X(f) X_{\epsilon}(s)] \langle s, z, z_1 \rangle_{ \epsilon } ds.
\end{align}
Our goal is now to compute the derivatives of $ \langle V_{\alpha}(z_1) V_{ -\frac{\gamma}{2}  }(z) \rangle_{\mathbb{H} } $ in order to prove Theorem \ref{BPZequa}. We will illustrate how this computation works with $ \partial_z \langle V_{\alpha}(z_1) V_{ -\frac{\gamma}{2}  }(z) \rangle_{\mathbb{H} } $. We must use our regularization procedure so we fix $\delta >0$ and choose $z,z_1 \in \mathbb{H}_{\delta}$ and $\epsilon< \delta$. We compute the derivative of the regularized partition function \eqref{reg_partition} with respect to $z$:
\begin{align*}
 \partial_z \langle z,z_1 \rangle_{\epsilon}  &=  -\frac{\gamma}{2} \int_{\mathbb{R}}  dc e^{-Qc}  \mathbb{E} \bigg[  \partial_z ( X_{\epsilon}(z) + \frac{Q}{2} \ln \hat{g}(z)) \epsilon^{\frac{\alpha^2}{2}} e^{ \alpha (X_{\epsilon}(z_1) + \frac{Q}{2} \ln \hat{g}(z_1) +c)} \epsilon^{\frac{\gamma^2}{8}} e^{ -\frac{\gamma}{2} (X_{\epsilon}(z) + \frac{Q}{2} \ln \hat{g}(z) +c)} \\
 & \: \: \times \exp\left(-\mu_{\partial} \int_{  \mathbb{R}} \epsilon^{\frac{\gamma^2}{4}} e^{\frac{\gamma}{2} (X_{\epsilon}(s)+c + \frac{Q}{2} \ln \hat{g}(s)  )} ds \right) \bigg] \\
& = -\frac{\gamma}{2} \langle (\partial_z X_{\epsilon}(z)) V_{\alpha}(z_1) V_{ -\frac{\gamma}{2}  }(z) \rangle_{\mathbb{H}, \epsilon} - \frac{\gamma Q}{4} (\partial_z \ln \hat{g}(z)) \langle z, z_1 \rangle_{\mathbb{H}, \epsilon}
\end{align*}

Since $\partial_z X_{\epsilon}(z) = \int_{\mathbb{H}} d^2x X(x) \partial_z \rho_{\epsilon}(z-x)   $, the term $ \langle (\partial_z X_{\epsilon}(z)) V_{\alpha}(z_1) V_{ -\frac{\gamma}{2}  }(z) \rangle_{\mathbb{H}, \epsilon}$ should be understood by the expression \eqref{Guassian_IPP} with  $f(x) = \partial_z \rho_{\epsilon}(z-x)$ and can thus be evaluated with \eqref{Gaussian_IPP}. Using \eqref{cov_h}, \eqref{reg1}, \eqref{reg2}, we compute:
\begin{align*}
 \mathbb{E}[\partial_z X_{\epsilon}(z) X_{\epsilon}(z_1)] &= \int_{\mathbb{H}} \int_{\mathbb{H}} d^2x d^2y \mathbb{E}[ X(x) X(y) ] \: \partial_z \rho_{\epsilon}(z-x) \rho_{\epsilon}(z_1 - y)\\
  &= \int_{\mathbb{H}} \int_{\mathbb{H}} d^2x d^2y \: \partial_x \mathbb{E}[ X(x) X(y) ]  \rho_{\epsilon}(z-x) \rho_{\epsilon}(z_1 - y) \\
 &= -\frac{1}{2} \int_{\mathbb{H}} \int_{\mathbb{H}} d^2x d^2y  \left( \frac{1}{x-y}  + \frac{1}{x - \overline{y}} \right)  \rho_{\epsilon}(z-x) \rho_{\epsilon}(z_1 - y) - \frac{1}{2} \int_{\mathbb{H}} d^2x  \: \partial_x \ln \hat{g}(x)  \rho_{\epsilon}(z-x)\\
\mathbb{E}[\partial_z X_{\epsilon}(z) X_{\epsilon}(z)] &= \int_{\mathbb{H}} \int_{\mathbb{H}} d^2x d^2y \mathbb{E}[ X(x) X(y) ] \: \partial_z \rho_{\epsilon}(z-x) \rho_{\epsilon}(z - y)  \\
 &= \partial_z \frac{1}{2} \int_{\mathbb{H}} \int_{\mathbb{H}} d^2x d^2y   \ln \frac{1}{ \vert x - y  \vert}  \rho_{\epsilon}(z-x) \rho_{\epsilon}(z - y)\\
  &+ \int_{\mathbb{H}} \int_{\mathbb{H}} d^2x d^2y \: \partial_x \left( \ln \frac{1}{\vert x - \overline{y} \vert } - \frac{1}{2} \ln \hat{g}(x) \right) \rho_{\epsilon}(z-x) \rho_{\epsilon}(z - y)  \\
  &=   - \frac{1}{2} \int_{\mathbb{H}} \int_{\mathbb{H}} d^2x d^2y  \frac{1}{ x - \overline{y}  }  \rho_{\epsilon}(z-x) \rho_{\epsilon}(z - y)  - \frac{1}{2}  \int_{\mathbb{H}} d^2x \: \partial_x   \ln \hat{g}(x)  \rho_{\epsilon}(z-x)   \\
\mathbb{E}[\partial_z X_{\epsilon}(z) X_{\epsilon}(s)] &= 2 \int_{\mathbb{H}} \int_{\mathbb{H}} d^2x d^2y \mathbb{E}[ X(x) X(y) ] \: \partial_z \rho_{\epsilon}(z-x) \rho_{\epsilon}(s - y)  \\
 &= - \int_{\mathbb{H}} \int_{\mathbb{H}} d^2x d^2y \left( \frac{1}{ x- y } + \frac{1}{ x- \overline{y} } \right)  \rho_{\epsilon}(z-x) \rho_{\epsilon}(s - y) - \frac{1}{2} \int_{\mathbb{H}} d^2x  \: \partial_x \ln \hat{g}(x)  \rho_{\epsilon}(z-x)
\end{align*}
Putting everything together and taking $\epsilon \rightarrow 0$ we arrive at:
\begin{align*}
 \partial_z  \langle z_1, z \rangle &= \left( - \frac{ \gamma^2}{8}  \frac{1}{  z - \overline{z}}  + \frac{\gamma}{4}  \alpha ( \frac{1}{z - z_1} + \frac{1}{z - \overline{z_1}})   \right) \langle z, z_1 \rangle    -  \frac{  \mu_{\partial} \gamma^2}{4} \int_{\mathbb{R}} \frac{1}{z - s} \langle s,z, z_1 \rangle ds \\
 &+ \left( \frac{\gamma}{4} (\alpha - \frac{\gamma}{2} -Q)  \langle z, z_1 \rangle - \frac{\mu_{\partial} \gamma^2 }{8} \int_{\mathbb{R}} \langle s, z, z_1 \rangle ds \right)  \partial_z \ln \hat{g}(z) \\
 &= \left( - \frac{ \gamma^2}{8}  \frac{1}{  z - \overline{z}}  + \frac{\gamma}{4}  \alpha ( \frac{1}{z - z_1} + \frac{1}{z - \overline{z_1}})   \right) \langle z, z_1 \rangle    -  \frac{  \mu_{\partial} \gamma^2}{4} \int_{\mathbb{R}} \frac{1}{z - s} \langle s,z, z_1 \rangle ds. 
\end{align*}

To cancel the metric dependent terms in the last line we have used Lemma \ref{lem1}. The expression derived above shows that $(z_1, z) \mapsto \partial_z  \langle z_1, z \rangle $ is continuous on the set $\lbrace z_1,z \in \mathbb{H} \vert z_1 \neq z \rbrace$. Indeed, for $z, z_1$ in this set all the poles of the fractions appearing in $\partial_z  \langle z_1, z \rangle $ lie outside the set and the correlation functions can easily be shown to be continuous in their arguments. The derivatives $\partial_{z_1}$, $\partial_{\overline{z}} $,  $\partial_{\overline{z_1}}$, $\partial_{zz} $ and the other second order partial derivatives are computed along the same lines and from their expressions one obtains that they are also all continuous on the set $\lbrace z_1,z \in \mathbb{H} \vert z_1 \neq z \rbrace$. Hence one can deduce that $(z_1, z) \mapsto   \langle V_{\alpha}(z_1) V_{ -\frac{\gamma}{2}  }(z) \rangle_{\mathbb{H} } $ is $\mathcal{C}^2$ on $\lbrace z_1,z \in \mathbb{H} \vert z_1 \neq z \rbrace$. In the computations the only additional difficulty that appears is for $ \partial_{zz} \langle z, z_1 \rangle$, when applying the product rule to the expression of $ \partial_{z} \langle z, z_1 \rangle$ one needs an expression for $\partial_z \langle s, z, z_1 \rangle$. It is obtained by integration by parts just like \eqref{Gaussian_IPP}, but there is an extra term coming from the fact that $\partial_z \langle s, z, z_1 \rangle$ contains an extra insertion on the boundary. The analogue of \eqref{Gaussian_IPP} that one needs is:
\begin{align}\label{Gaussian_IPP2}
\langle X(f)  V_{\alpha}(z_1) V_{ -\frac{\gamma}{2}  }(z) V_{\gamma}(s) \rangle_{\mathbb{H}, \epsilon} &=  \alpha \mathbb{E}[X(f) X_{\epsilon}(z_1)] \langle s, z, z_1 \rangle_{ \epsilon } - \frac{\gamma}{2} \mathbb{E}[X(f) X_{\epsilon}(z)] \langle s, z, z_1 \rangle_{ \epsilon } \\
 &  + \frac{\gamma}{2} \mathbb{E}[X(f) X_{\epsilon}(s)] \langle s, z, z_1 \rangle_{ \epsilon } - \mu_{\partial} \frac{\gamma}{2} \int_{ \mathbb{R}} \mathbb{E}[X(f) X_{\epsilon}(t)] \langle s, t, z, z_1 \rangle_{ \epsilon } dt. \nonumber
\end{align}


\begin{proof}[Proof of Theorem~\ref{BPZequa}]
Following the method given above we compute all the derivatives that we need:

\begin{align}
\partial_{z_1}  \langle z, z_1 \rangle  & = \left(  \frac{\alpha \gamma}{4} ( \frac{1}{z_1 - z} + \frac{1}{z_1 - \overline{z}}  )  - \frac{\alpha^2}{2}    \frac{1}{z_1 - \overline{z_1}}   \right) \langle z, z_1 \rangle  +  \frac{ \alpha \mu_{\partial} \gamma}{2} \int_{ \mathbb{R}} \frac{1}{z_1 - s} \langle s,z, z_1\rangle ds \label{der1} \\
\partial_{\overline{z_1}}  \langle z, z_1 \rangle  & = \left(  \frac{\alpha \gamma}{4} ( \frac{1}{\overline{z_1} - z} + \frac{1}{ \overline{z_1} - \overline{z}}  )  - \frac{\alpha^2}{2}   \frac{1}{\overline{z_1} - z_1}   \right) \langle z, z_1 \rangle  +  \frac{ \alpha \mu_{\partial} \gamma}{2} \int_{ \mathbb{R}} \frac{1}{\overline{z_1} - s} \langle s, z, z_1 \rangle ds \label{der2}  \\
\partial_{z}  \langle z,z_1 \rangle  & = \left( - \frac{ \gamma^2}{8}  \frac{1}{  z - \overline{z}}  + \frac{\gamma}{4}  \alpha ( \frac{1}{z - z_1} + \frac{1}{z - \overline{z_1}})   \right) \langle z, z_1 \rangle    -  \frac{  \mu_{\partial} \gamma^2}{4} \int_{\mathbb{R}} \frac{1}{z - s} \langle s,z, z_1 \rangle ds \label{der3} \\
\partial_{\overline{z}}  \langle z,z_1 \rangle  & = \left( - \frac{ \gamma^2}{8}  \frac{1}{  \overline{z} -z}  + \frac{\gamma}{4} \alpha ( \frac{1}{\overline{z} - z_1} + \frac{1}{\overline{z} - \overline{z_1}})   \right) \langle z, z_1 \rangle   -  \frac{  \mu_{\partial} \gamma^2}{4} \int_{ \mathbb{R}} \frac{1}{ \overline{z}  - s} \langle s,z, z_1 \rangle ds \label{der4}
\end{align}
\begin{align}
\partial_{zz}  \langle z,z_1 \rangle & = \left(  \frac{ \gamma^2}{8}  \frac{1}{  (z - \overline{z})^2}  - \frac{\gamma}{4}  \alpha ( \frac{1}{(z - z_1)^2} + \frac{1}{(z - \overline{z_1})^2})   \right) \langle z, z_1 \rangle \label{der5} \\ 
 & + \left( \frac{\gamma^2}{8} \frac{1}{z -\overline{z}} - \frac{\gamma \alpha}{4} ( \frac{1}{z - z_1} + \frac{1}{z -\overline{z}_1})\right)^2  \langle z, z_1 \rangle   + \frac{  \mu_{\partial} \gamma^2}{4} \int_{ \mathbb{R}} \frac{1}{(z - s)^2} \langle s,z, z_1 \rangle ds \label{der6} \\
 & -\frac{\mu_{\partial} \gamma^3}{8} \left(  - \frac{\gamma}{2}  \frac{1}{  z - \overline{z}}  + \alpha ( \frac{1}{z - z_1} + \frac{1}{z - \overline{z_1}})   \right)   \int_{ \mathbb{R}} \frac{1}{z - s} \langle s,z, z_1 \rangle ds \label{der7} \\
& +  \frac{ \gamma^4 \mu_{\partial}^2 }{16} \int_{ \mathbb{R}} \int_{ \mathbb{R}} \frac{1}{z - s} \frac{1}{z - t} \langle s,t,z, z_1 \rangle ds dt  -  \frac{  \mu_{\partial} \gamma^4}{16} \int_{ \mathbb{R}} \frac{1}{(z - s)^2} \langle s,z, z_1 \rangle ds.\label{der8}
\end{align}
 We are now going to use these expressions we found for the partial derivatives of $\langle z, z_1 \rangle$ to check that the sum of all the terms of the BPZ equation add up to $0$. We start by checking that all the terms without  $\mu_{\partial}$ cancel correctly, we gather them based on their $\alpha$-dependence. Terms with $\alpha^2$:
\begin{align*}
\alpha^2 \bigg( - \frac{1}{4(z- z_1)^2} - \frac{1}{4(z-  \overline{z_1}  )^2} + \frac{1}{4} ( \frac{1}{z - z_1} + \frac{1}{z -  \overline{z_1}})^2 \quad & \\
- \frac{1}{2 (z - z_1) ( z_1 - \overline{z_1})  } + \frac{1}{2 (z - \overline{z_1}) ( z_1 - \overline{z_1})  } & \bigg) \langle z, z_1 \rangle   = 0.
\end{align*}
Terms with $ \alpha$:
\begin{align*}
& \alpha \left( (\frac{Q}{2} -  \frac{\gamma}{4} - \frac{1}{\gamma} ) ( \frac{1}{(z - z_1)^2} + \frac{1}{(z - \overline{z_1})^2} ) - \frac{\gamma }{4}  \frac{1}{z - \overline{z}} ( \frac{1}{z - z_1} + \frac{1}{z - \overline{z_1}} ) \right)\langle z, z_1 \rangle \\
 + & \alpha \left( \frac{\gamma}{4} ( \frac{1}{z - z_1} \frac{1}{z_1 - \overline{z}} + \frac{1}{z - \overline{z_1}} \frac{1}{\overline{z_1} - \overline{z}})   -  \frac{\gamma}{4} \frac{1}{z - \overline{z}} ( \frac{1}{z_1 - \overline{z}} + \frac{1}{\overline{z_1} - \overline{z}} ) \right) \langle z, z_1 \rangle =0.
\end{align*}
Terms with no $\alpha$:
$$ \left( \frac{1}{2} + \frac{\gamma^2}{8} + \frac{\gamma^2}{16} + \Delta_{-\frac{\gamma}{2}} \right) \frac{1}{(z - \overline{z})^2}  \langle z, z_1 \rangle   =0.  $$
We must now make sure that all the terms with $\mu_{\partial}$ cancel correctly, for this we need to perform an integration by parts on the last term of \eqref{der6}. However there is a slight subtlety coming from the fact that the derivative $\partial_s$ applied to $\langle s,z,z_1 \rangle$, computed using an analogue of \eqref{Gaussian_IPP2}, gives a term in $  \frac{1}{s-t}\langle s,t,z,z_1 \rangle$ and $ \frac{1}{s -t} $ is not integrable. But this difficulty can be easily overcome with our regularization procedure. We get, 
\begin{align}
  \frac{ \mu_{\partial} \gamma^2}{4} & \int_{\mathbb{R}}  \frac{1}{(z -s)^2}  \langle s,z,z_1 \rangle_{\epsilon} ds   =   \frac{ \mu_{\partial} \gamma^2}{4} \int_{\mathbb{R}} \partial_s (\frac{1}{z -s}) \langle s,z,z_1 \rangle_{\epsilon} ds  =  - \frac{ \mu_{\partial} \gamma^2}{4} \int_{\mathbb{R}}  \frac{1}{z -s} \partial_s \langle s,z,z_1 \rangle_{\epsilon} ds \nonumber  \\
  \underset{ \epsilon \rightarrow 0}{\longrightarrow}   & - \frac{ \mu_{\partial} \gamma^2}{4} \int_{\mathbb{R}} \frac{1}{z-s} \left( \frac{\gamma^2 }{4} ( \frac{1}{s - z} + \frac{1}{s - \overline{z}} ) - \frac{\gamma}{2}  \alpha ( \frac{1}{s - z_1} +  \frac{1}{s - \overline{z_1}} )    \right) \langle s, z, z_1 \rangle ds \label{der9}\\
  & -\frac{ \mu_{\partial}^2 \gamma^4}{8}  \lim_{\epsilon \rightarrow 0} \int_{ \mathbb{R}} \int_{ \mathbb{R}} \frac{1}{(z-s)} \frac{1}{(s - t)_{\epsilon,\epsilon}} \langle s, t, z, z_1 \rangle_{\epsilon} dt ds, \label{der10}
\end{align}
where we have introduced
$$ \frac{1}{(s - t)_{\epsilon,\epsilon}}  = 4 \int_{\mathbb{H}} \int_{ \mathbb{H}} d^2x_1 d^2x_2 \frac{1}{s + x_1 -t -x_2} \rho_{\epsilon}(x_1) \rho_{\epsilon}(x_2). $$
We symmetrize the last term:
\begin{align}\label{double2}
& - \frac{ \mu_{\partial}^2 \gamma^4}{8}  \int_{ \mathbb{R}} \int_{ \mathbb{R}}  \frac{1}{(z-s)} \frac{1}{(s - t)_{\epsilon,\epsilon}} \langle s, t, z, z_1 \rangle_{\epsilon} dt ds \nonumber  \\
& = - \frac{ \mu_{\partial}^2 \gamma^4}{16} \int_{ \mathbb{R}} \int_{ \mathbb{R}}  \frac{1}{(s - t)_{\epsilon,\epsilon}} (\frac{1}{z-s} - \frac{1}{z-t} ) \langle s, t, z, z_1 \rangle_{\epsilon} dt ds \nonumber \\
& = - \frac{ \mu_{\partial}^2 \gamma^4}{16}  \int_{ \mathbb{R}} \int_{ \mathbb{R}} \frac{1}{(z-s)(z-t) }  \frac{s-t}{(s - t)_{\epsilon,\epsilon}}  \langle s, t, z, z_1 \rangle_{\epsilon} dt ds \nonumber \\ 
& \underset{ \epsilon \rightarrow 0}{ \longrightarrow}  - \frac{\gamma^4 \mu_{\partial}^2}{16} \int_{\mathbb{R}} \int_{\mathbb{R}}  \frac{1}{z-s} \frac{1}{z-t}   \langle s, t, z, z_1 \rangle dt ds. 
\end{align}
From the above we see that the double terms in $\int_{\mathbb{R}} \int_{\mathbb{R}} $ coming from \eqref{der8} and \eqref{double2} cancel correctly. The terms in $\frac{1}{(z-s)^2} $  from \eqref{der8} and \eqref{der10} cancel. Finally we look at the cross terms, combining the integrals in \eqref{der4}, \eqref{der7}, \eqref{der9} containing a $\overline{z}$: 
\begin{align*}
\mu_{\partial} \frac{\gamma^2}{4} \int_{\mathbb{R}} \left( \frac{1}{z-s} \frac{1}{\overline{z} -s} + \frac{1}{z - \overline{z}} \frac{1}{z-s} - \frac{1}{\overline{z} -s } \frac{1}{z  - \overline{z}} \right) \langle s, z, z_1 \rangle ds = 0.
\end{align*}
Again combining the integrals in \eqref{der1}, \eqref{der7}, \eqref{der9} containing a $z_1$: 
\begin{align*}
\mu_{\partial} \frac{\alpha \gamma  }{2} \int_{\mathbb{R}} \left( \frac{1}{z-s} \frac{1}{s - z_1 } - \frac{1}{z - z_1} \frac{1}{z-s} + \frac{1}{z -z_1 } \frac{1}{z_1  - s} \right) \langle s, z, z_1 \rangle ds = 0.
\end{align*}
Lastly combining the integrals in \eqref{der2}, \eqref{der7}, \eqref{der9} containing a $\overline{z}_1$: 
\begin{align*}
\mu_{\partial} \frac{\alpha \gamma  }{2} \int_{\mathbb{R}} \left( \frac{1}{z-s} \frac{1}{s - \overline{z}_1 } - \frac{1}{z - \overline{z}_1} \frac{1}{z-s} + \frac{1}{z -\overline{z}_1 } \frac{1}{\overline{z}_1  - s} \right) \langle s, z, z_1 \rangle ds = 0.
\end{align*}
Therefore we have proved Theorem \ref{BPZequa}.
\end{proof}

\subsection{Correlation functions as moments of GMC on the unit circle}

We are now going to express our correlation function $\langle V_{-\frac{\gamma}{2}}(z)  V_{\alpha}(z_1)   \rangle_{\mathbb{H}}$ as a moment of Gaussian multiplicative chaos (GMC) on the unit circle and obtain from the BPZ equation of Theorem \ref{BPZequa} a differential equation on $G( \gamma, p ,t)$. 

\begin{proof}[Proof of Proposition~\ref{EquaG}]

As explained in appendix \ref{appmap}, the KPZ relation of \cite{Disk} tells us that we have the relation
\begin{equation}\label{KPZ}
 \langle V_{\alpha}(z_1) V_{- \frac{\gamma}{2}}(z) \rangle_{\mathbb{H}} =  \frac{1}{\vert z_1 - \overline{z}_1 \vert^{ 2 \Delta_{\alpha} - 2 \Delta_{- \frac{\gamma}{2}} }} \frac{1}{\vert z - \overline{z}_1 \vert^{ 4 \Delta_{- \frac{\gamma}{2}} }} \langle V_{\alpha}(0) V_{- \frac{\gamma}{2}}(t) \rangle_{\mathbb{D}}
\end{equation}
where $\Delta_{\alpha}$ and $\Delta_{- \frac{\gamma}{2}}$ are defined as in Theorem \ref{BPZequa} and where $\langle V_{-\frac{\gamma}{2}}(t)  V_{\alpha}(0)   \rangle_{\mathbb{D}}$ is the correlation of LCFT defined on the unit disk. As was observed in \cite{Disk}\footnote{See \cite[Corollary 3.10]{Disk}: our correlation function $ \langle V_{-\frac{\gamma}{2}}(t)  V_{\alpha}(0)   \rangle_{\mathbb{D}} $ is actually the normalization constant $\mathcal{Z}$ of the corollary, it can be computed by setting $F=1$. The idea is that once the correlation function has been conformally mapped to the unit disk $\mathbb{D}$,  \eqref{link1} is directly obtained from the analogue of \eqref{reg_partition} written on $\mathbb{D}$ by performing in the integral over $c$ the change variable $u  = \mu_{\partial} e^{\frac{\gamma c}{2}} \int_{ \partial \mathbb{D}} \epsilon^{\frac{\gamma^2}{4}} e^{\frac{\gamma}{2}X_{\epsilon}(s)} ds $ and sending $\epsilon$ to $0$. The integral over $u$ then gives the gamma function.} we can express it in terms of inverse moments of the GMC measure on the unit circle,
\begin{align}\label{link1}
 \langle V_{-\frac{\gamma}{2}}(t)  V_{\alpha}(0)   \rangle_{\mathbb{D}}  & = \frac{2}{\gamma} \mu_{\partial}^{ - \frac{2 \alpha - \gamma - 2Q }{\gamma} } \Gamma( \frac{2 \alpha - \gamma - 2Q }{\gamma} ) t^{ \frac{\alpha \gamma}{2} } ( 1 -t^2)^{ - \frac{\gamma^2}{8} }\\
 & \times  \mathbb{E}\left[ \left( \int_{\partial \mathbb{D}} e^{ \frac{\gamma}{2} (X(e^{i \theta}) - 2 \alpha \ln \vert e^{i \theta} \vert + \gamma \ln \vert e^{i \theta} -t \vert)} d\theta   \right)^{ - \frac{2 \alpha - \gamma - 2Q }{\gamma} } \right] \nonumber \\
 &  = \frac{2}{\gamma} \mu_{\partial}^{ - \frac{2 \alpha - \gamma - 2Q }{\gamma} } \Gamma( \frac{2 \alpha - \gamma - 2Q }{\gamma} ) t^{ \frac{\alpha \gamma}{2} } ( 1 -t^2)^{ - \frac{\gamma^2}{8} } G(\gamma,p,t),
\end{align}
where we have the following relation between our parameters $p$ and $\alpha$:
\begin{equation}\label{link2}
- p = \frac{2 \alpha - \gamma - 2Q }{\gamma} .
\end{equation}
The condition $\alpha > Q + \frac{\gamma}{2}$ of Theorem \ref{BPZequa} is then precisely the condition $ p< 0 $ of Proposition \ref{EquaG}. It is then a long but straightforward computation to turn the BPZ equation on $ \langle V_{\alpha}(z_1) V_{ - \frac{\gamma}{2}}(z) \rangle_{\mathbb{H}}$ into the differential equation on $G(\gamma,p,t)$, more details on this can be found in appendix \ref{appmap}. Therefore we have proved Proposition \ref{EquaG}.
\end{proof}

Let us make a few comments on our method. One might attempt to prove Proposition \ref{EquaG} without introducing at all the framework of Liouville theory on $\mathbb{H}$ and by just computing partial derivatives directly on the function $G(\gamma,p,t)$. In this computation  all terms cancel easily except a few terms coming from the second derivative in $t$ for which it is very difficult to see that they equal zero.\footnote{The term that one cannot see directly that it equals $0$ would be of the form, $$ \int_0^{2 \pi} \int_0^{2 \pi} du dv S(u,v,t) \mathbb{E} \left[ e^{\frac{\gamma}{2}X(e^{iu})} e^{\frac{\gamma}{2}X(e^{iv})} \left(\int_0^{2 \pi} \vert t - e^{i \theta} \vert^{\frac{\gamma^2}{2}} e^{\frac{\gamma}{2}X(e^{i \theta})} d \theta \right)^{p-2} \right],$$ for a complicated function $S$ of $u,v,t$. Showing this term cancels seems just as complicated as proving Theorem \ref{FyBo}.} Hence Liouville theory seems to be the correct framework where the computations are tractable and thus the KPZ relation \eqref{KPZ} - a highly non trivial change of variable - is a key ingredient of our proof. We mention that the same phenomenon appears for the BPZ equations used to prove the DOZZ formula in \cite{DOZZ1,DOZZ2} which are proved before applying the KPZ relation that fixes the automorphisms of the sphere. In other words it is not possible to permute the computation of derivatives and the uniformization given by \eqref{KPZ}. On the other hand our method has been very recently adapted in \cite{RZhu} to the case of GMC on the unit interval $[0,1]$ and there is great hope that it will also work on the two-dimensional GMC measure on the unit disk $\mathbb{D}$.  

\section{The shift equation for $U(\gamma,p)$}\label{sec_shift}

The goal of this section is to prove Proposition \ref{Shift} by identifying the coefficients $C_1$, $C_2$, $B_1$ and $B_2$ of Proposition \ref{CB}. For this purpose we will perform asymptotic expansions of $G(\gamma,p,t)$ in $t \rightarrow 0$ and in $ t \rightarrow 1$.

\subsection{Asymptotic expansion in $t \rightarrow 0$} 

Since $t \mapsto G( \gamma,p,t) $ is a continuous function on $[0,1]$, we have
$$ G(\gamma,p,0) = U( \gamma,p)$$
and since $p<0$, we cannot have a term in $t^{ \frac{\gamma^2}{2}(p-1) }$ in the expression $G( \gamma,p,t) $ and therefore we get:
\begin{equation}
C_2 = 0.
\end{equation}
Then taking $t =0$ in the expression of $G(\gamma,p,t)$, we find:
\begin{equation}\label{C1}
C_1 = U( \gamma, p).
\end{equation}

\subsection{Asymptotic expansion in $t \rightarrow 1$}
Our goal is now to perform the asymptotic expansion in $t \rightarrow 1$ which will give the following expression for the coefficient $B_2$:
\begin{equation}\label{B2}
B_2 = 2 \pi  p \frac{\Gamma( - \frac{\gamma^2}{2} -1)   }{\Gamma( - \frac{\gamma^2}{4} )^2 }  U( \gamma, p -1 ).
\end{equation}
We will  distinguish two cases based on whether $\gamma$ is smaller or greater than $\sqrt{2}$. The case $\gamma = \sqrt{2}$ will be handled by a continuity argument.
\subsubsection{The case $\gamma < \sqrt{2}$}
We start here with $\gamma \in (0, \sqrt{2})$. Taking $t=1$ in the expression of $G(\gamma,p,t)$, we get:
\begin{equation}
B_1  =  \mathbb{E}\left[ \left(\int_0^{2 \pi} \vert 1 - e^ {i \theta} \vert^{\frac{\gamma^2}{2}  } e^{ \frac{\gamma}{2} X(e^ {i \theta})} d \theta \right)^p \right].
\end{equation}
At this stage there is nothing we can do with this coefficient but as an output of our proof we will also obtain a value for this quantity, see Corollary \ref{corol}. We must now go to the next order to find $B_2$. We introduce the notation $ h_u(t) = \vert t - e^{iu} \vert^{ \frac{\gamma^2}{2} } $. In the following computations we will extensively use the Girsanov theorem (also called the Cameron-Martin formula) in the following way: 
\begin{align}\label{girsavov}
\mathbb{E}\left[\int_0^{2 \pi}  e^{\frac{\gamma}{2} X(e^ {i u}) } du \left(\int_0^{2 \pi} e^{ \frac{\gamma}{2} X(e^ {i \theta}) } d\theta \right)^{p-1} \right] &= \int_0^{2 \pi} \mathbb{E}\left[  \left(\int_0^{2 \pi} e^{ \frac{\gamma}{2} (X(e^ {i \theta}) + \frac{\gamma}{2} \mathbb{E}[X(e^{i \theta})X(e^{iu})] ) }  d\theta \right)^{p-1} \right] du \nonumber \\
 &= \int_0^{2 \pi} \mathbb{E}\left[  \left(\int_0^{2 \pi} \frac{e^{ \frac{\gamma}{2} X(e^ {i \theta}) }}{\vert e^{iu} - e^{i \theta } \vert^{ \frac{\gamma^2}{2} }   }  d\theta \right)^{p-1} \right] du.
\end{align}
To derive \eqref{girsavov} rigorously one needs to perform the computation with $X$ replaced by an approximation $X_{\epsilon}$ and then take $\epsilon$ to $0$. We then write, using Taylor's formula at the first step and \eqref{girsavov} in the second:
\begin{align}\label{ref_dl}
\mathbb{E}& \left[  \left(  \int_{0}^{2 \pi} \vert t - e^{i \theta} \vert^{  \frac{\gamma^2}{2}} e^{ \frac{\gamma}{2} X( e^ {i \theta}) } d \theta \right)^p \right] -  \mathbb{E}\left[  \left( \int_{0}^{2 \pi} \vert 1 - e^{i \theta} \vert^{  \frac{\gamma^2}{2}} e^{ \frac{\gamma}{2} X(e^ {i \theta}) } d \theta \right)^{p  }\right] \nonumber \\
& =  p  \mathbb{E}\left[   \int_0^{2 \pi} du \left( \vert t - e^{i u} \vert^{ \frac{\gamma^2}{2}} -\vert 1 - e^{i u} \vert^{ \frac{\gamma^2}{2}} \right) e^{ \frac{\gamma}{2} X(e^ {i u})}  \left( \int_0^{2 \pi} \vert 1 - e^{i \theta} \vert^{ \frac{\gamma^2}{2}} e^{ \frac{\gamma}{2} X( e^ {i \theta})} d \theta \right)^{ p-1 }\right]  + R(t) \nonumber  \\
& =  p  \int_0^{2 \pi} du ( h_u(t) - h_u(1) ) \mathbb{E}\left[ \left( \int_0^{2 \pi} \frac{\vert 1 - e^{i \theta}    \vert^{ \frac{\gamma^2}{2}} }{\vert e^{iu} - e^{i \theta}    \vert^{ \frac{\gamma^2}{2}}}   e^{ \frac{\gamma}{2} X( e^ {i \theta})} d \theta \right)^{ p-1  }\right]  + R(t). 
\end{align}
$R(t)$ are higher order terms that are given by the Taylor formula applied to $ x \mapsto x^p$ :
\begin{align*}
 R(t) & = p(p-1) \int_0^{2 \pi} \int_0^{2 \pi} d\theta_1 d\theta_2 (h_{\theta_1}(t) - h_{\theta_1}(1) ) (h_{\theta_2}(t) - h_{\theta_2}(1) ) \\ 
& \times \mathbb{E} \left[  \int_{s=0}^1 ds (1 -s) \left( \int_0^{2 \pi} d \theta \frac{h_{\theta}(1) + s(h_{\theta}(t) -h_{\theta}(1)  ) } {\vert e^{i\theta_1} - e^{i\theta} \vert^{\frac{\gamma^2}{2} } \vert e^{i\theta_2} - e^{i\theta} \vert^{\frac{\gamma^2}{2} } }  e^{\frac{\gamma}{2} X(e^ {i \theta}) } \right)^{p-2} \right]. 
\end{align*}
One may wonder if the GMC measures with fractional powers that appear in the above computations are well-defined. It is known that for $\beta \in \mathbb{R}$, 
$$\int_0^{2 \pi} \frac{1}{\vert 1 - e^{i \theta}\vert^{\frac{\beta \gamma}{2}} } e^{\frac{\gamma}{2} X(e^{i \theta})} d \theta < + \infty \: \: \:  a.s. \Leftrightarrow \beta < \frac{\gamma}{2} + \frac{2}{\gamma}, $$
see for instance \cite[Lemma 2.7]{review2}. Therefore in the expression of $R(t)$ the only problem is when $\theta_1 = \theta_2 $. But in our case we are dealing with negative moments so at $\theta_1 = \theta_2$ we simply get $0$. It is thus straightforward to bound $R(t)$ by
\begin{align}\label{bound_R}
\vert R(t) \vert &  \leq M \int_0^{2 \pi} \int_0^{2 \pi} d\theta_1  d\theta_2 (h_{\theta_1}(t) - h_{\theta_1}(1) ) (h_{\theta_2}(t) - h_{\theta_2}(1) )  \\
& \leq \tilde{M} (1 -t)^2 \nonumber
\end{align}
for some $M, \tilde{M}>0$. Coming back to the first term of  \eqref{ref_dl}, notice that the following integral can be expressed in terms of hypergeometric functions $F$:
\begin{align}\label{int1}
\frac{1}{2 \pi} \int_0^{2 \pi}  (h_u(t) - h_u(1) ) du  & = F( -\frac{\gamma^2}{4}, -\frac{\gamma^2}{4}, 1,t^2 ) - F( -\frac{\gamma^2}{4}, -\frac{\gamma^2}{4}, 1,1 ) \\
& = \frac{\Gamma( \frac{\gamma^2}{2} + 1 )}{\Gamma( \frac{\gamma^2}{4} +1 )^2}  F( -\frac{\gamma^2}{4}, -\frac{\gamma^2}{4}, -\frac{\gamma^2}{2}, 1 -t^2 ) - F( -\frac{\gamma^2}{4}, -\frac{\gamma^2}{4}, 1,1 ) \nonumber  \\
& +   \frac{\Gamma(- \frac{\gamma^2}{2} -1 )}{\Gamma(- \frac{\gamma^2}{4})^2} (1 -t^2)^{1 + \frac{\gamma^2}{2}}F( 1 +\frac{\gamma^2}{4},1 +\frac{\gamma^2}{4}, 2 + \frac{\gamma^2}{2}, 1 -t^2 ).  \nonumber
\end{align}
In the last line we have used the formula \eqref{hpy2} given in appendix \ref{apphyp} valid here for $\gamma \neq \sqrt{2} $. We notice that $1 < 1 + \frac{\gamma^2}{2}< 2 $ and that \eqref{int1} implies:
\begin{equation}\label{DL1}
\lim_{t \rightarrow 1} \frac{1}{(1 -t^2)^{1 + \frac{\gamma^2}{2}}} \frac{1}{2 \pi} \int_0^{2 \pi} \left(h_u(t) - h_u(1) - (t-1)h'_u(1) \right) du = \frac{\Gamma(-\frac{\gamma^2}{2} -1)   }{\Gamma(-\frac{\gamma^2}{4} )^2}.
\end{equation}
The key observation is that the same result holds if we add some continuous function $c$ defined on the unit circle:

\begin{lemma}\label{LEM}
For $\epsilon > 0$ and $u \in [0, 2 \pi] $ let $ h_u(1 - \epsilon) = \vert 1 - \epsilon - e^{iu} \vert^{ \frac{\gamma^2}{2} } $ and let $c: \partial \mathbb{D} \mapsto \mathbb{R}$ be a continuous function defined on the unit circle. Then we have:
\begin{equation}\label{Lem}
\lim_{\epsilon \rightarrow 0} \frac{1}{\epsilon^{1 + \frac{\gamma^2}{2}}} \frac{1}{2^{1 + \frac{\gamma^2}{2}} } \frac{1}{2 \pi} \int_0^{2 \pi} \left( h_u(1 - \epsilon) - h_u(1) + \epsilon h'_u(1) \right) c(e^{i u }) du = \frac{\Gamma(-\frac{\gamma^2}{2} -1)   }{\Gamma(-\frac{\gamma^2}{4} )^2} c(1).
\end{equation}

\end{lemma}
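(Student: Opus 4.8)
The plan is to reduce the statement to the already-established case $c\equiv 1$, which is exactly \eqref{DL1}, by showing that the signed density
\[
\phi_\epsilon(u) := \frac{1}{\epsilon^{1+\frac{\gamma^2}{2}}}\,\frac{1}{2^{1+\frac{\gamma^2}{2}}}\,\frac{1}{2\pi}\left(h_u(1-\epsilon)-h_u(1)+\epsilon h'_u(1)\right)
\]
concentrates all of its mass at the point $u=0$ (i.e.\ $e^{iu}=1$) as $\epsilon\to0$. Writing $L:=\frac{\Gamma(-\gamma^2/2-1)}{\Gamma(-\gamma^2/4)^2}$, equation \eqref{DL1} together with $(2\epsilon-\epsilon^2)^{1+\gamma^2/2}\sim(2\epsilon)^{1+\gamma^2/2}$ gives $\int_0^{2\pi}\phi_\epsilon(u)\,du\to L$, so that the goal becomes $\int_0^{2\pi}\phi_\epsilon(u)c(e^{iu})\,du\to L\,c(1)$; in other words $\phi_\epsilon(u)\,du$ should converge, tested against continuous functions, to $L$ times the Dirac mass at $u=0$.

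First I would fix a small $\eta>0$ and split the integral into the regions $|u|<\eta$ and $|u|\ge\eta$. On the outer region the map $t\mapsto h_u(t)=|t-e^{iu}|^{\frac{\gamma^2}{2}}$ is smooth on a neighbourhood of $t=1$ with second derivative bounded uniformly in $|u|\ge\eta$ (since $|t-e^{iu}|$ stays bounded below there), so Taylor's formula yields $|h_u(1-\epsilon)-h_u(1)+\epsilon h'_u(1)|\le C_\eta\,\epsilon^2$ and hence $|\phi_\epsilon(u)|\le C_\eta\,\epsilon^{1-\frac{\gamma^2}{2}}$. Because $\gamma<\sqrt2$ the exponent is positive, so $\int_{|u|\ge\eta}\phi_\epsilon(u)c(e^{iu})\,du\to0$; applying the same bound with $c\equiv1$ and using $\int_0^{2\pi}\phi_\epsilon\to L$ forces $\int_{|u|<\eta}\phi_\epsilon\to L$.

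On the inner region I would write $c(e^{iu})=c(1)+r(u)$ with $|r(u)|\le\omega_c(\eta)$, the modulus of continuity of $c$. The main term $c(1)\int_{|u|<\eta}\phi_\epsilon$ converges to $L\,c(1)$ by the previous paragraph, and for the remainder I need the uniform bound $\limsup_\epsilon\int_{|u|<\eta}|\phi_\epsilon(u)|\,du<\infty$. This follows once $\phi_\epsilon\ge0$ on $|u|<\eta$: with $g(t)=|t-e^{iu}|^2$ a direct computation gives $h''_u(t)=\frac{\gamma^2}{4}g^{\gamma^2/4-2}\big[(\gamma^2-2)(t-\cos u)^2+2\sin^2u\big]$, and for $\eta$ and $\epsilon$ small enough the bracket is positive on $t\in[1-\epsilon,1]$, $|u|\le\eta$, so $h_u$ is convex there and therefore $\phi_\epsilon\ge0$. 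Hence $\int_{|u|<\eta}|\phi_\epsilon|=\int_{|u|<\eta}\phi_\epsilon$ remains bounded (close to $L$), whence $\big|\int_{|u|<\eta}\phi_\epsilon r\big|\le\omega_c(\eta)\int_{|u|<\eta}\phi_\epsilon=O(\omega_c(\eta))$ uniformly in $\epsilon$. Combining the three pieces gives $\limsup_\epsilon\big|\int_0^{2\pi}\phi_\epsilon c-L\,c(1)\big|\le C\,\omega_c(\eta)$, and letting $\eta\to0$ proves \eqref{Lem}.

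The main obstacle, and the reason one cannot simply expand $h_u$ to second order on the whole circle, is the non-integrable singularity of $h''_u(1)$ at $u=0$: its dominant term is $\frac{\gamma^2}{2}(4\sin^2(u/2))^{\gamma^2/4-1}\sim\frac{\gamma^2}{2}|u|^{\frac{\gamma^2}{2}-2}$, which fails to be integrable precisely when $\gamma<\sqrt2$. The concentration picture makes this transparent: under the scaling $u=\epsilon v$ one finds $h_u(1-\epsilon)-h_u(1)+\epsilon h'_u(1)\approx\epsilon^{\frac{\gamma^2}{2}}\big[(1+v^2)^{\gamma^2/4}-|v|^{\gamma^2/2}\big]$, and the profile $(1+v^2)^{\gamma^2/4}-|v|^{\gamma^2/2}$ is nonnegative and integrable over $\mathbb{R}$ exactly for $\gamma<\sqrt2$ (its tail is $\frac{\gamma^2}{4}|v|^{\frac{\gamma^2}{2}-2}$). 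This both justifies the positivity exploited above and confirms that the present argument is confined to the regime $0<\gamma<\sqrt2$, the remaining range $\gamma\in[\sqrt2,2)$ requiring the separate analysis based on the other branch of \eqref{int1}.
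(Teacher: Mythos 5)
Your overall strategy (reduce to the exact computation \eqref{DL1} by showing that the kernel $\phi_\epsilon$ concentrates at $u=0$, split at a fixed $\eta$, and use the modulus of continuity of $c$) mirrors the paper's proof, and your treatment of the outer region $|u|\ge\eta$ is fine. The gap is in the inner region: your uniform bound on $\int_{|u|<\eta}|\phi_\epsilon|\,du$ rests on the claim that the bracket $(\gamma^2-2)(t-\cos u)^2+2\sin^2u$ is positive on all of $t\in[1-\epsilon,1]$, $|u|\le\eta$, hence that $h_u$ is convex there and $\phi_\epsilon\ge0$. This claim is false precisely where the mass sits: at $u=0$ the bracket equals $(\gamma^2-2)(t-1)^2<0$ for $\gamma<\sqrt2$, and indeed $h_0(t)=(1-t)^{\gamma^2/2}$ is strictly \emph{concave} on $[1-\epsilon,1)$ since $\gamma^2/2<1$. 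More generally, convexity fails throughout the regime $|u|\lesssim\epsilon$, where $2\sin^2u$ is dominated by $(2-\gamma^2)(t-\cos u)^2$; under your own scaling $u=\epsilon v$ this is the region $|v|=O(1)$, which carries a non-negligible fraction of the limiting mass, so it is not a removable corner case. Consequently the positivity of $\phi_\epsilon$ on $|u|<\eta$, and with it the uniform $L^1$ control you need to absorb the error $r(u)$, is not established. (The leading-order profile $(1+v^2)^{\gamma^2/4}-|v|^{\gamma^2/2}$ is indeed positive, but your proposal obtains it only heuristically; making that scaling limit rigorous is exactly the missing work.)

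The repair is what the paper does: drop positivity altogether and bound $\int_{|u|<\eta}|\phi_\epsilon|\,du$ directly, splitting once more at $|u|=\epsilon$. On $[-\epsilon,\epsilon]$, substitute $u=\epsilon v$ and bound the integrand pointwise, which after the change of variables gives a bound uniform in $\epsilon$ (the paper's constant $M_2$); on $\epsilon\le|u|\le\eta$, use Taylor's formula with the non-uniform estimate $\sup_{x\in[1-\epsilon,1]}|h''_u(x)|\le C|u|^{\frac{\gamma^2}{2}-2}$, so that the contribution is
\begin{equation*}
O\Bigl(\epsilon^{1-\frac{\gamma^2}{2}}\int_\epsilon^\eta u^{\frac{\gamma^2}{2}-2}\,du\Bigr)=O(1).
\end{equation*}
With this $L^1$ bound replacing your positivity claim, the rest of your argument (subtracting $c(1)$, invoking \eqref{DL1}, and letting $\eta\to0$) goes through verbatim and coincides with the paper's proof.
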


\begin{proof}

We start by showing that
$$ \frac{1}{\epsilon^{1 + \frac{\gamma^2}{2}}}  \int_0^{2 \pi} \vert h_u(1 - \epsilon) - h_u(1) + \epsilon h'_u(1) \vert du$$
remains bounded as $\epsilon$ goes to $0$. We split the integral into two parts, $\int_{- \epsilon}^{\epsilon} $ and $\int_{ \epsilon }^{2 \pi - \epsilon} $. To analyse the first part we can perform an asymptotic expansion on $ u \mapsto e^{iu} $, we get:
\begin{align*}
 & \frac{1}{\epsilon^{1 + \frac{\gamma^2}{2}}} \int_{ - \epsilon  }^{ \epsilon}  \vert h_u(1 - \epsilon) - h_u(1) + \epsilon h'_u(1) \vert du \\
& \leq  \frac{M_1}{\epsilon^{1 + \frac{\gamma^2}{2}}}  \int_{ - \epsilon  }^{ \epsilon} \big \vert  \vert \epsilon + iu \vert^{ \frac{\gamma^2}{2} }   - \vert u \vert^{ \frac{\gamma^2}{2} } + \epsilon \frac{\gamma^2}{4} \vert u \vert^{ \frac{\gamma^2}{2} }  \big \vert du \\
 & = \frac{M_1}{\epsilon^{1 + \frac{\gamma^2}{2}}}  \int_{ -1  }^{1} \big \vert  \vert \epsilon + i\epsilon v \vert^{ \frac{\gamma^2}{2} }   - \vert \epsilon v  \vert^{ \frac{\gamma^2}{2} } + \epsilon \frac{\gamma^2}{4} \vert \epsilon v  \vert^{ \frac{\gamma^2}{2} }   \big \vert \epsilon dv \leq M_2
\end{align*}
for some $M_1, M_2 >0$. The other part of the integral can be bounded by the Taylor formula:
\begin{align*}
\frac{1}{\epsilon^{1 + \frac{\gamma^2}{2}}}  \int_{\epsilon}^{2 \pi - \epsilon} \vert h_u(1 - \epsilon) - h_u(1) + \epsilon h'_u(1) \vert du & \leq \frac{1}{\epsilon^{1 + \frac{\gamma^2}{2}}}  \int_{\epsilon}^{2 \pi - \epsilon} \frac{\epsilon^2}{2} \sup_{ x \in [1-\epsilon,1]} \vert h''_u(x)  \vert du \\
  & \leq M_3  \epsilon^{ 1 - \frac{\gamma^2}{2} } \int_{\epsilon}^1 \vert u \vert^{ \frac{\gamma^2}{2} -2  } du  \leq M_4
\end{align*}
for some constants $M_3,M_4 >0$. By continuity of $c$, for $ \epsilon' >0$ there exists an $\eta > 0$ such that $\forall u \in (-\eta, \eta) $, $ \vert c(e^{iu}) - c(1) \vert \leq \epsilon'$. We can then write:
\begin{align*}
& \frac{1}{\epsilon^{1 + \frac{\gamma^2}{2}}}  \int_0^{2 \pi} \vert h_u(1 - \epsilon) - h_u(1) + \epsilon h'_u(1) \vert \vert c(e^{iu}) - c(1) \vert du \\
& = \frac{1}{\epsilon^{1 + \frac{\gamma^2}{2}}}  \int_{ - \eta  }^{ \eta } \vert h_u(1 - \epsilon) - h_u(1) + \epsilon h'_u(1) \vert \vert c(e^{iu}) - c(1) \vert du \\ 
& + \frac{1}{\epsilon^{1 + \frac{\gamma^2}{2}}}  \int_{ \eta}^{ 2 \pi -\eta}   \vert h_u(1 - \epsilon) - h_u(1) + \epsilon h'_u(1) \vert \vert c(e^{iu}) - c(1) \vert du \\
& \leq \tilde{M}_1 \epsilon' + \frac{1}{\epsilon^{1 + \frac{\gamma^2}{2} }} \epsilon^2 \tilde{M}_2  
\end{align*}
for some $\tilde{M}_1, \tilde{M}_2 >0$. Since the above is true for all $\epsilon'$ we easily arrive at:
$$  \lim_{\epsilon \rightarrow 0} \frac{1}{\epsilon^{1 + \frac{\gamma^2}{2}}}  \int_0^{2 \pi} \vert h_u(1 - \epsilon) - h_u(1) + \epsilon h'_u(1) \vert \vert c(e^{iu}) - c(1) \vert du =0.$$
From this and using the exact computation \eqref{DL1} we obtain \eqref{Lem}.
\end{proof}

We then apply the Lemma \ref{LEM} to our problem by choosing:
$$ c(e^{iu}) = \mathbb{E}\left[ \left( \int_0^{2 \pi} \frac{\vert 1 - e^{i \theta}    \vert^{ \frac{\gamma^2}{2}} }{\vert e^{iu} - e^{i \theta}    \vert^{ \frac{\gamma^2}{2}}}   e^{ \frac{\gamma}{2} X( e^ {i \theta})} d \theta \right)^{ p-1  } \right]. $$
This provides an asymptotic expansion for the first term of \eqref{ref_dl} up to the order $o((1-t)^{1 + \frac{\gamma^2}{2}}) $. Combined with the bound \eqref{bound_R} found for $R(t)$, and recalling here that $\frac{\gamma^2}{2} <2$, one obtains the following expansion in powers of $(1-t)$ for $G(\gamma,p,t)$:
\begin{align*}
G(\gamma, p , t ) &= B_1 + p( t-1) \int_0^{2 \pi} du h'_u(1) \mathbb{E} \left[ \left( \int_0^{2 \pi}  \frac{\vert 1 - e^{i \theta} \vert^{\frac{\gamma^2}{2}   } }{\vert e^{iu} - e^{i \theta} \vert^{\frac{\gamma^2}{2} }  } e^{ \frac{\gamma}{2} X( e^ {i \theta}) d \theta}  \right)^{p-1} \right] \\
&+ 2  \pi  p \frac{\Gamma( - \frac{\gamma^2}{2} -1)   }{\Gamma( - \frac{\gamma^2}{4} )^2 }  U( \gamma, p -1 ) (1 - t^2)^{ 1 + \frac{\gamma^2}{2} } + o((1 - t)^{ 1 + \frac{\gamma^2}{2} }).
\end{align*}
This gives the annouced value for $B_2$ stated in \eqref{B2}.

\subsubsection{The case $\gamma > \sqrt{2}$}
In order to repeat the argument given above in the case $ \gamma \in (\sqrt{2},2)$, we need to go one order further in the computations. If we go one order further in \eqref{DL1} we still get:
\begin{equation}\label{DL2}
\lim_{t \rightarrow 1} \frac{1}{(1 -t^2)^{1 + \frac{\gamma^2}{2}}} \frac{1}{2 \pi} \int_0^{2 \pi} (h_u(t) - h_u(1) - (t-1)h'_u(1) - \frac{1}{2}(t-1)^2 h''_u(1) ) du = \frac{\Gamma(-\frac{\gamma^2}{2} -1)   }{\Gamma(-\frac{\gamma^2}{4} )^2}.
\end{equation}
Lemma \ref{LEM} still holds if we go one order further and the analysis of $R(t)$ gives this time
\begin{equation}\label{DL3}
R(t)  =  m(1-t)^2 + O((1-t)^3)
\end{equation}
for some $m \in \mathbb{R}$. Indeed since we can write
$$h_{\theta_1}(t) - h_{\theta_1}(1) = (t-1)h'_{\theta_1}(1) + \frac{(t-1)^2}{2} h''_{\theta_1}(1) + o((t-1)^2), $$
and we have
\begin{align*}
R(t) &= p(p-1)(t-1)^2 \int_0^{2 \pi} \int_0^{2 \pi}  h'_{\theta_1}(1) h'_{\theta_2}(1)\\
&\times  \mathbb{E} \left[ \int_{s=0}^1 ds (1 -s) \left( \int_0^{2\pi} d\theta \frac{h_{\theta}(1) + s(h_{\theta}(t) -h_{\theta}(1)  ) } {\vert e^{i\theta_1} - e^{i\theta} \vert^{\frac{\gamma^2}{2} } \vert e^{i\theta_2} - e^{i\theta} \vert^{\frac{\gamma^2}{2} } }  e^{\frac{\gamma}{2} X(e^ {i \theta}) } \right)^{p-2} \right] +O((t-1)^3),
\end{align*}
applying one last Taylor expansion on the above expectation
 $\mathbb{E}[.]$ we get
 \begin{align*}
 &\mathbb{E} \left[ \int_{s=0}^1 ds (1 -s) \left( \int_0^{2\pi} d\theta \frac{h_{\theta}(1) + s(h_{\theta}(t) -h_{\theta}(1)  ) } {\vert e^{i\theta_1} - e^{i\theta} \vert^{\frac{\gamma^2}{2} } \vert e^{i\theta_2} - e^{i\theta} \vert^{\frac{\gamma^2}{2} } }  e^{\frac{\gamma}{2} X(e^ {i \theta}) } \right)^{p-2} \right] \\
 &  = \frac{1}{2}\mathbb{E} \left[  \left( \int_0^{2\pi} d\theta \frac{h_{\theta}(1)   e^{\frac{\gamma}{2} X(e^ {i \theta}) } } {\vert e^{i\theta_1} - e^{i\theta} \vert^{\frac{\gamma^2}{2} } \vert e^{i\theta_2} - e^{i\theta} \vert^{\frac{\gamma^2}{2} } }  \right)^{p-2} \right] + O(t-1),
 \end{align*}
and so we finally arrive at \eqref{DL3}. 
From the above we see that we can write an expansion of $G(\gamma,p,t)$ of the form,
$$ G(\gamma, p,t) = B_1 + b_1 (1-t) + b_2 (1 -t)^2 + 2 \pi  p \frac{\Gamma( - \frac{\gamma^2}{2} -1)   }{\Gamma( - \frac{\gamma^2}{4} )^2 }  U( \gamma, p -1 ) (1-t^2)^{ 1 + \frac{\gamma^2}{2} } + o((1 -t)^{1 + \frac{\gamma^2}{2}}), $$
for some $b_1, b_2 \in \mathbb{R}$. From this we deduce \eqref{B2} in the case $\sqrt{2} < \gamma < 2$.

\begin{proof}[Proof of Proposition~\ref{Shift}]
Starting from Proposition \ref{CB}, the change of basis formula \eqref{hpy1} coming from the theory of hypergeometric functions combined with the fact that $C_2 =0$ leads for $p<0$ and $\gamma \neq \sqrt{2}$ to the relation:
\begin{equation}
B_2 = \frac{\Gamma(-1 - \frac{\gamma^2}{2}  ) \Gamma( \frac{\gamma^2}{4}(1 -p) +1  )  }{\Gamma( - \frac{\gamma^2}{4}  )  \Gamma( - \frac{\gamma^2 p}{4}  )} C_1.
\end{equation}
Now using the expressions \eqref{C1} and \eqref{B2} found for $C_1$ and $B_2$ we arrive at for $p<0$ and $\gamma \neq \sqrt{2}$:
\begin{equation}\label{shift3}
\frac{U(\gamma,p ) }{U( \gamma, p-1)} = \frac{2 \pi p \Gamma(- \frac{\gamma^2 p}{4} )}{\Gamma(-\frac{\gamma^2}{4} ) \Gamma(1 - (p-1) \frac{\gamma^2}{4} )} = \frac{2 \pi \Gamma(1 - p \frac{\gamma^2}{4} ) }{ \Gamma(1 - \frac{\gamma^2}{4}) \Gamma(1 - (p-1) \frac{\gamma^2}{4} )}.
\end{equation}
By continuity of $ p \mapsto U(\gamma,p) $ we can take the limit $p \rightarrow 0$ in the above relation to get the shift equation for all $p \leq0$. Then by continuity of $ \gamma \mapsto U(\gamma,p)$, a simple exercise, we extend \eqref{shift3} to the case $\gamma = \sqrt{2}$. Therefore we have proved Proposition \ref{Shift}.
\end{proof}

\section{Appendix}

\subsection{Mapping of the BPZ equation to the unit disk}\label{appmap}

In this section we turn the BPZ equation we found on $\mathbb{H}$ using Liouville theory into an equation on the function $G(\gamma,p,t)$. We recall that the conformal map $ \psi_1: x \mapsto \frac{x-i}{x+i}$ maps the upper half plane $\mathbb{H}$ equipped with metric $\hat{g}(x) = \frac{4}{\vert x + i \vert^4} $ to the unit disk $\mathbb{D}$ equipped with the Euclidean metric. The KPZ formula of \cite[Proposition 3.7]{Disk} for a change of domain then tells us that:
\begin{equation}
\langle V_{\alpha}(z_1) V_{- \frac{\gamma}{2}}(z) \rangle_{\mathbb{H}} =  \vert \psi'_1(z_1) \vert^{ 2 \Delta_{\alpha}  } \vert \psi'_1(z) \vert^{ 2 \Delta_{- \frac{\gamma}{2}} } \langle V_{\alpha}(\psi_1(z_1)) V_{- \frac{\gamma}{2}}(\psi_1(z)) \rangle_{\mathbb{D}}.
\end{equation}
We apply again the KPZ formula of \cite[Theorem 3.5]{Disk} to the conformal map $\psi_2$ of $\mathbb{D}$ onto $\mathbb{D}$ that maps $\psi_1(z_1)$ to $0$ and $\psi_1(z)$ to $t \in (0,1)$. More explicitly $\psi_2(x) = e^{i \theta} \frac{x - \psi_1(z_1)}{1 - x \overline{\psi_1(z_1)}} $ with $\theta$ chosen such that $ \psi_2(\psi_1(z)) \in (0,1)$. This gives:
\begin{equation}
\langle V_{\alpha}(\psi_1(z_1)) V_{- \frac{\gamma}{2}}(\psi_1(z)) \rangle_{\mathbb{D}} = \vert \psi'_2(\psi_1(z_1)) \vert^{ 2 \Delta_{\alpha}  } \vert \psi'_2(\psi_1(z)) \vert^{ 2 \Delta_{- \frac{\gamma}{2}} } \langle V_{\alpha}(0) V_{- \frac{\gamma}{2}}(t) \rangle_{\mathbb{D}}.
\end{equation}
Combining both of the above relations we arrive at:
\begin{equation}
\langle V_{\alpha}(z_1) V_{- \frac{\gamma}{2}}(z) \rangle_{\mathbb{H}} =  \frac{1}{\vert z_1 - \overline{z}_1 \vert^{ 2 \Delta_{\alpha} - 2 \Delta_{- \frac{\gamma}{2}} }} \frac{1}{\vert z - \overline{z}_1 \vert^{ 4 \Delta_{- \frac{\gamma}{2}} }} \langle V_{\alpha}(0) V_{- \frac{\gamma}{2}}(t) \rangle_{\mathbb{D}}.
\end{equation}
Now starting from our BPZ equation,
\begin{equation*}
(\frac{4}{\gamma^2} \partial_{zz} + \frac{ \Delta_{- \frac{\gamma}{2}}  }{(z - \overline{z})^2} + \frac{\Delta_{\alpha}}{ (z - z_1)^2} + \frac{\Delta_{\alpha}}{ (z - \overline{z}_1)^2} + \frac{1}{z - \overline{z}} \partial_{ \overline{z}} + \frac{1}{z - z_1} \partial_{ z_1}  + \frac{1}{z - \overline{z_1}} \partial_{ \overline{z_1}} )  \langle V_{\alpha}(z_1) V_{- \frac{\gamma}{2}}(z) \rangle_{\mathbb{H}} =0,
\end{equation*}
we obtain the following equation for $\langle V_{\alpha}(0) V_{- \frac{\gamma}{2}}(t) \rangle_{\mathbb{D}} $:
\begin{equation}
\left( \frac{t^2}{\gamma^2} \frac{d^2}{dt^2} + \left(- \frac{ t}{\gamma^2} + \frac{2 t^3 - t}{2(1-t^2)}\right) \frac{d}{dt} + \left(\Delta_{\alpha} + \Delta_{ - \frac{\gamma}{2}}   \frac{2 t^2 -t^4}{( t^2 -1)^2}  \right) \right)\langle V_{\alpha}(0) V_{- \frac{\gamma}{2}}(t) \rangle_{\mathbb{D}} = 0.
\end{equation}
Then using \eqref{link1} and \eqref{link2}, we get the announced differential equation of Proposition \ref{EquaG} for $G(\gamma,p,t)$,
\begin{equation}
( t (1 - t^2) \frac{\partial^2}{\partial t^2} +(t^2-1)\frac{\partial}{\partial t} + 2( C - (A +B+1)t^2) \frac{\partial}{\partial t} - 4ABt ) G(\gamma,p, t) = 0,
\end{equation}
with $ A = -\frac{\gamma^2 p}{4}$, $B = - \frac{\gamma^2}{4}$, and $ C = \frac{\gamma^2}{4}(1 - p) + 1  $.

\subsection{Hypergeometric functions}\label{apphyp}

We recall here briefly all the fact on hypergeometric functions that we have used throughout our paper. For $A$, $B$, $C$, and $x$ real numbers we introduce the following power series,
\begin{equation}
F(A,B,C,x) = \sum_{n=0}^{ \infty} \frac{A_n B_n}{ n! C_n} x^n,
\end{equation}
where $ A_n = \frac{\Gamma(A +n)}{\Gamma(A)}$ for $ n \in \mathbb{N}$, $B_n$ and $C_n$ are defined similarly with $B$ or $C$ in place of $A$, and $\Gamma$ is the standard gamma function. The function $F$ is the hypergeometric series and it can be used to solve the following hypergeometric equation:
\begin{equation}
\left( x(1-x) \frac{\partial^2}{\partial x^2} + (C -(A +B +1)x) \frac{\partial}{\partial x} - AB \right) H(x) = 0.
\end{equation}
Under the condition that $C$ and $C-A-B$ are not integers, the solutions of this equation can be given in two different bases, one corresponding to an expansion in powers of $x$ and the other in powers of $1 -x$. We write:
\begin{align}
H(x) & = C_1 F(A,B,C,x) + C_2 x^{1-C} F(1 +A-C,1 +B -C,2-C,x), \\
H(x) & = B_1 F(A,B,1 +A +B -C,1 - x) + B_2 (1-x)^{C-A -B} F(C-A,C-B,1 + C - A -B,1 - x). \nonumber
\end{align}
In our case where $ A = -\frac{\gamma^2 p}{4}$, $B = - \frac{\gamma^2}{4}$, and $ C = \frac{\gamma^2}{4}(1 - p) + 1  $, the four real constants $C_1$, $C_2$, $B_1$, $B_2$ are linked by the following relation:
\begin{equation}\label{hpy1}
\begin{pmatrix}
B_1  \\
B_2
\end{pmatrix} 
= 
 \begin{pmatrix}
\frac{\Gamma(1 + \frac{\gamma^2}{2}) \Gamma( \frac{\gamma^2}{4}(1 -p) +1  )  }{\Gamma(1 + \frac{\gamma^2}{4})\Gamma( \frac{\gamma^2}{4}(2 -p) +1  )  } & \frac{\Gamma(1 + \frac{\gamma^2}{2}) \Gamma( \frac{\gamma^2}{4}(p-1) +1 )  }{\Gamma(1 + \frac{\gamma^2}{4})\Gamma( \frac{\gamma^2}{4}p +1 )   } \\
 \frac{\Gamma(-1 - \frac{\gamma^2}{2}  ) \Gamma( \frac{\gamma^2}{4}(1 -p) +1  )  }{\Gamma( - \frac{\gamma^2}{4}  )  \Gamma( - \frac{\gamma^2 p}{4}  )} & \frac{\Gamma( -1 - \frac{\gamma^2}{2} )  \Gamma( \frac{\gamma^2}{4}(p-1) +1) }{ \Gamma(  - \frac{\gamma^2}{4} ) \Gamma( \frac{\gamma^2}{4}(p-2) )  }
\end{pmatrix} 
\begin{pmatrix}
C_1  \\
C_2 
\end{pmatrix}.
\end{equation}
This relation can easily be deduced from the following properties of the hypergeometric function:
\begin{align}\label{hpy2}
F(A,B,C,x) & = \frac{\Gamma(C)\Gamma(C -A - B)}{\Gamma(C-A) \Gamma(C-B)} F(A,B,A+B-C+1,1-x) \\
& + (1 -x )^{C-A-B}\frac{\Gamma(C)\Gamma(A +B -C )}{\Gamma(A) \Gamma(B)} F(C-A,C-B,C-A-B+1,1-x) \nonumber
\end{align}
and
\begin{equation}
F(A,B,C,1) = \frac{\Gamma(C) \Gamma(C-A-B)}{\Gamma(C-A) \Gamma(C-B)}.
\end{equation}
More details on hypergeometric functions can be found in \cite{Hyp}.

\end{document}